\newtheorem{theorem}{Theorem}
\newtheorem{corollary}[theorem]{Corollary}
\newtheorem{lemma}[theorem]{Lemma}
\newtheorem{proposition}[theorem]{Proposition}
\newenvironment{proof}[1][Proof]{\noindent\textbf{#1.} }{\ \rule{0.5em}{0.5em}}
\begin{document}

\title{Some forms of exceptional Lie algebras}
\author{John R. Faulkner\\University of Virginia\\Department of Mathematics\\Kerchof Hall, P. O. Box 400137\\Charlottesville VA 22904-4137\\USA\smallskip\\email: jrf@virginia.edu}
\maketitle

\begin{abstract}
Some forms of Lie algebras of types $E_{6}$, $E_{7}$, and $E_{8}$ are
constructed using the exterior cube of a rank $9$ finitely generated
projective module.

\end{abstract}

\section{Introduction}

Let $\mathcal{G}(\mathbb{C)}$ be a simple Lie algebra over $\mathbb{C}$ of
type $X_{l}$ and let $\mathcal{G}(\mathbb{Z})$ be the $\mathbb{Z}$-span of a
Chevalley basis of $\mathcal{G}(\mathbb{C)}$. \ We say that a Lie algebra
$\mathcal{G}$ over a unitary commutative ring $k$ is a \textit{form} of $X_{l}
$ if there is a faithfully flat, commutative, unital $k$-algebra $F$ with
$\mathcal{G}_{F}\cong\mathcal{G}(\mathbb{Z})_{F}$ where $\mathcal{G}%
_{F}=\mathcal{G}\otimes_{k}F$ as a $F$-module. \ The main purpose of this
paper is the construction of some forms of $E_{6}$, $E_{7}$, and $E_{8}$ using
the exterior cube of a rank $9$ finitely generated projective module. \ In
\S 2, we develop the necessary exterior algebra and localization machinery.
\ In \S 3, we construct a Lie algebra from the exterior cube of a rank $9$
finitely generated projective module, and then give a twisted version of the
construction. \ In \S 4, we show that the Lie algebras are forms of $E_{8}$
and identify some subalgebras which are forms of $E_{6}$ and $E_{7}$.

\bigskip

\section{Preliminary results}

Let $k$ be a unitary commutative ring. \ Throughout, we require that a
$k$-module $M$ be unital; i.e., $1x=x$ for $x\in M$. \ Let $M^{\ast}%
=Hom_{k}(M,k)$, the dual module. \ Recall that a $k$-module $M$ is
\textit{projective} if $M$ is a direct summand of a free module (\cite{B88}%
,II.2.2). \ Moreover, $M$ is a finitely generated projective module if and
only if $M$ is a direct summand of a free module of finite rank (\cite{B88}%
,II.2.2). \ Let $M$ and $N$ be finitely generated projective modules. \ Then
$M^{\ast}$ and $M\otimes N$ are also finitely generated projective
((\cite{B88},II.2.6,II.3.7), and we may identify $M$ with $M^{\ast\ast}$ where
$m(\phi)=\phi(m)$ for $m\in M$ and $\phi\in M^{\ast}$ (\cite{B88},II.2.7).
\ Moreover, the linear map%
\[
M\otimes M^{\ast}\rightarrow End(M)
\]
with $m\otimes\phi\rightarrow m\phi$ where $(m\phi)(m^{\prime})=\phi
(m^{\prime})m$ is bijective (\cite{B88},II.4.2). \ Thus, we can define the
\textit{trace} function $tr$ on $End(M)$ as the unique linear map with
$tr(m\phi)=\phi(m)$. \ Since%
\[
tr((m\phi)(m^{\prime}\phi^{\prime}))=\phi^{\prime}(m)\phi(m^{\prime}),
\]
we see that $tr(\alpha\beta)=tr(\beta\alpha)$ for $\alpha,\beta\in End(M)$.
\ Letting $gl(M)=End(M)$ with Lie product $[\alpha,\beta]=\alpha\beta
-\beta\alpha$, we see%
\[
\lbrack gl(M),gl(M)]\subset sl(M):=\{\alpha\in gl(M):tr(\alpha)=0\},
\]
so $sl(M)$ is an ideal in $gl(M)$.

Let $k$-alg denote the category of commutative unital $k$-algebras. \ If
$K\in$ $k$-alg and $M$, $N$ are $k$-modules, let $M_{K}=M\otimes_{k}K$ as a
$K$-module. \ If $M$ is a finitely generated projective $k$-module, then
\begin{align*}
(M\otimes_{k}N)_{K}  & \cong M_{K}\otimes_{K}N_{K},\\
(M^{\ast})_{K}  & \cong(M_{K})^{\ast},\\
gl(M)_{K}  & \cong gl(M_{K})
\end{align*}
via canonical isomorphisms (\cite{B88},II.5.1,II.5.4).

If $\mathfrak{p}$ is a prime ideal of $k$, let $k_{\mathfrak{p}}%
=(k\diagdown\mathfrak{p)}^{-1}k$ be the \textit{localization} of $k$ at
$\mathfrak{p}$ and $M_{\mathfrak{p}}=M_{k_{\mathfrak{p}}}$ be the
\textit{localization} of $M$ at $\mathfrak{p}$ (\cite{B89},II). \ If $M$ is
finitely generated projective, then $M_{\mathfrak{p}}$ is a free
$k_{\mathfrak{p}}$-module of finite rank (\cite{B89},II.5.2). \ If
$M_{\mathfrak{p}}$ has rank $n$ for all prime ideals $\mathfrak{p}$ of $k$, we
say $M$ has \textit{rank} $n$. \ In this case, $M_{K}$ has rank $n$ for all
$K\in k$-alg (\cite{B89},II.5.3). \ Moreover, if $M,N$ are finitely generated
projective modules and $\alpha\in Hom(M,N)$, then $\alpha$ is injective
(respectively, surjective, bijective, zero) if and only if $\alpha
_{\mathfrak{p}}=\alpha\otimes Id_{k_{\mathfrak{p}}}\in Hom(M_{\mathfrak{p}%
},N_{\mathfrak{p}})$ is injective (respectively, surjective, bijective, zero)
for each prime ideal $\mathfrak{p}$ (\cite{B89}, II.3.3). \ This allows the
transfer of multilinear identities using localization as follows: if
$M_{1},\ldots,M_{l},N$ are finite generated projective modules and
\[
\mu:M_{1}\times\cdots\times M_{l}\rightarrow N
\]
is a $k$-multilinear map, then for $K\in$ $k$-alg there is a unique
$K$-multilinear map%

\[
\mu_{K}:M_{1K}\times\cdots\times M_{lK}\rightarrow N_{K}%
\]
with
\begin{equation}
\mu_{K}(m_{1}\otimes1,\ldots,m_{l}\otimes1)=\mu(m_{1},\ldots,m_{l}%
)\otimes1.\label{eq: multilinear extension}%
\end{equation}
We claim $\mu_{\mathfrak{p}}=0$ for each prime ideal $\mathfrak{p}$ implies
$\mu=0$. \ Indeed, $M_{1}\otimes\cdots\otimes M_{l}$ is finitely generated
projective and $\mu$ induces a linear map
\[
\tilde{\mu}:M_{1}\otimes\cdots\otimes M_{l}\rightarrow N
\]
with each $(\tilde{\mu})_{\mathfrak{p}}=\widetilde{(\mu_{\mathfrak{p}})}=0$,
so $\tilde{\mu}=0$ and $\mu=0$.

Recall $F\in k$-alg is \textit{faithfully flat} provided a sequence
$M^{\prime}\rightarrow M\rightarrow M^{\prime\prime}$ is exact if and only if
the induced sequence $M_{F}^{\prime}\rightarrow M_{F}\rightarrow M_{F}%
^{\prime\prime}$ is exact. \ We shall need the following example of a
faithfully flat algebra. \ Recall a quadratic form $q$ on $M$ is
\textit{nonsingular} if $a\rightarrow q(a,~)$ is an isomorphism $M\rightarrow
M^{\ast}$ where
\[
q(a,b):=q(a+b)-q(a)-q(b).\
\]
We say that $K\in k$-alg is a \textit{quadratic \'{e}tale algebra} if $K$ is a
finitely generated projective $k$-module of rank $2$ with a nonsingular
quadratic form $n$ admitting composition; i.e.,%
\[
n(ab)=n(a)n(b).
\]
\ We did not find a suitable reference for the following result, so we include
a proof communicated to us by H. Petersson.

\begin{proposition}
\label{prop: quadratic etale}If $K$ is a quadratic \'{e}tale algebra over $k$,
then $K$ is faithfully flat and $K_{K}\cong K\oplus K$.
\end{proposition}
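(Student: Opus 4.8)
The plan is to handle the two assertions in turn, leaning on the localization principle developed in this section. For faithful flatness I would first observe that $K$ is flat over $k$: being projective, it is a direct summand of a free module, and tensoring commutes with direct sums while free modules are flat. To promote flatness to faithful flatness in the sense defined above, it remains to verify that base change by $K$ reflects exactness; since $K$ is flat, this reduces to showing that $H\otimes_{k}K=0$ forces $H=0$ for every $k$-module $H$ (applied to the homology at the middle term of a sequence whose $K$-base change is exact). Here the rank hypothesis is decisive: for each prime $\mathfrak{p}$ the module $K_{\mathfrak{p}}$ is free of rank $2$, so $(H\otimes_{k}K)_{\mathfrak{p}}\cong H_{\mathfrak{p}}\otimes_{k_{\mathfrak{p}}}K_{\mathfrak{p}}\cong H_{\mathfrak{p}}\oplus H_{\mathfrak{p}}$. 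If $H\otimes_{k}K=0$, this vanishes for every $\mathfrak{p}$, whence $H_{\mathfrak{p}}=0$ for all $\mathfrak{p}$ and therefore $H=0$.

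For the splitting $K_{K}\cong K\oplus K$, I would exhibit an explicit homomorphism and prove bijectivity by localization. Writing $t(a)=n(1,a)$ for the trace and $\bar{a}=t(a)1-a$ for the conjugation (which is $k$-linear and, $K$ being commutative, a $k$-algebra automorphism), I define
\[
\mu_{1},\mu_{2}:K\otimes_{k}K\rightarrow K,\qquad\mu_{1}(a\otimes b)=ab,\quad\mu_{2}(a\otimes b)=a\bar{b}.
\]
Each $\mu_{i}$ is a homomorphism of $K$-algebras, where $K$ acts through the left tensor factor: multiplicativity of $\mu_{2}$ uses $\overline{bb^{\prime}}=\bar{b}\,\bar{b^{\prime}}$ together with commutativity, and left $K$-linearity is immediate from $c(a\otimes b)=(ca)\otimes b$. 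Assembling them yields a $K$-algebra homomorphism into the product algebra,
\[
\phi=(\mu_{1},\mu_{2}):K_{K}=K\otimes_{k}K\rightarrow K\oplus K.
\]

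To see that $\phi$ is an isomorphism I would regard it as a $k$-linear map between the finitely generated projective $k$-modules $K\otimes_{k}K$ and $K\oplus K$, both of rank $4$, and invoke the criterion that such a map is bijective as soon as each $\phi_{\mathfrak{p}}$ is bijective. Since both sides are then free over the local ring $k_{\mathfrak{p}}$, a Nakayama argument reduces bijectivity to the residue field $\kappa(\mathfrak{p})$. Over a field, a rank-$2$ algebra with nonsingular norm admitting composition is quadratic \'{e}tale, hence either $\kappa\times\kappa$ or a separable quadratic field extension; in either case the conjugation is the nontrivial automorphism fixing $\kappa$, the homomorphisms $\mu_{1},\mu_{2}$ are distinct, their kernels are the two distinct maximal ideals, and $\phi$ is the classical isomorphism splitting a separable quadratic algebra by itself. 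The multilinear transfer principle of this section guarantees that the trace, conjugation, and the norm identities used are the localizations of their global counterparts, so the residue-field computation controls $\phi$ over $k$.

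The step I expect to be the main obstacle is organizing this reduction cleanly. One must confirm that nonsingularity of $n$, the identities $a\bar{a}=n(a)1$ and $\overline{ab}=\bar{a}\,\bar{b}$, and the dichotomy between the split and field cases are all stable under the base-change isomorphisms $(M^{\ast})_{K}\cong(M_{K})^{\ast}$ and $(M\otimes_{k}N)_{K}\cong M_{K}\otimes_{K}N_{K}$ that are used implicitly when passing to $k_{\mathfrak{p}}$ and then to $\kappa(\mathfrak{p})$. Once these compatibilities are secured, the bijectivity of $\phi$ over each residue field is routine, and the localization principle carries it back to $\phi$ over $k$; the resulting $K$-algebra isomorphism in particular gives the stated splitting.
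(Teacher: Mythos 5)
Your proposal is correct and follows the paper's strategy in its essentials: your map $\phi(a\otimes b)=(ab,\,a\bar b)$ is the paper's $\eta(a\otimes b)=ab\oplus\bar ab$ with the roles of the two tensor factors exchanged, and both proofs establish bijectivity by localization. The differences are in two sub-arguments, and they are worth comparing. For faithful flatness you argue flat (since projective) plus ``$H\otimes_k K=0$ implies $H=0$,'' checked at primes using that $K_{\mathfrak{p}}$ is free of rank $2$; the paper instead cites that faithful flatness is a local property ($K_m$ is nonzero free, hence faithfully flat, over each $k_m$). Both are standard and equally short. At the bottom of the localization argument you invoke the classification of quadratic \'{e}tale algebras over a field together with the classical splitting $L\otimes_{\kappa}L\cong L\times L$, whereas the paper stays self-contained: it proves surjectivity directly by computing that $n(\bar u-u)$ equals the Gram determinant of $n$ in the basis $1,u$, hence is nonzero, so the image contains $0\oplus(\bar u-u)$, then $0\oplus 1$ and $1\oplus 0$, and concludes by dimension count. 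Your route buys brevity by citing classical theory; the paper's buys self-containment, which is the point of including the proof at all (the author says no suitable reference was found). Two repairs you should make. First, in the split case $K=\kappa\times\kappa$ the kernels of $\mu_1,\mu_2$ are not maximal ideals of $K\otimes_{\kappa}K$ (their quotients are $\kappa\times\kappa$, not fields); what your Chinese-remainder argument actually needs is that the two kernels are comaximal, which does hold, so the conclusion survives. Second, the phrase ``the residue-field computation controls $\phi$ over $k$'' conflates two transfers: the localization principle moves an identity from the local rings $k_{\mathfrak{p}}$ to $k$, not from the residue fields $\kappa(\mathfrak{p})$. For bijectivity this distinction is harmless (Nakayama plus equal-rank freeness lifts bijectivity from $\kappa(\mathfrak{p})$ to $k_{\mathfrak{p}}$), but the identity $\overline{bb'}=\bar b\,\bar b'$ that makes $\mu_2$, and hence $\phi$, an algebra homomorphism must be available over $k_{\mathfrak{p}}$ itself---either verified there directly (a basis $1,v$ exists over a local ring) or derived globally from linearizations of $n(ab)=n(a)n(b)$ and nonsingularity; vanishing over $\kappa(\mathfrak{p})$ is not enough. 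The paper's ``using localization, it suffices to assume that $k$ is a field'' glosses over the same point, so this is a shared imprecision rather than a defect peculiar to your write-up.
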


\begin{proof}
For each maximal ideal $m$ of $k$, $K_{m}$ is a nonzero free $k_{m}$-module,
and hence faithfully flat (\cite{B89}, II.3.1). \ Thus, $K$ is faithfully flat
over $k$ (\cite{B89}, II.3.4). \ Let $t(a)=n(a,1)$ and $\bar{a}=t(a)1-a$, for
$a\in K$. \ We claim $\eta:K_{K}\rightarrow K\oplus K$ with $\eta(a\otimes
b)=ab\oplus\bar{a}b$ is a $K$-algebra isomorphism. \ Using localization, it
suffices to assume that $k$ is a field. \ In this case, it is well-known that
$K$ is commutative, $n(1)=1$, $a\rightarrow\bar{a}$ is an involution, and
$a^{-1}=n(a)^{-1}\bar{a}$, if $n(a)\neq0$. Thus, $\eta$ is a homomorphism of
$K$-algebras with involution where $K\oplus K$ has the exchange involution.
\ By dimensions, it suffices to show $\eta$ is surjective. \ Let $1,u$ be a
$k$-basis of $K$. \ We see%
\begin{align*}
n(\bar{u}-u)  & =n(t(u)1-2u)\\
& =4n(u)-t(u)^{2}\\
& =\det\left[
\begin{array}
[c]{cc}%
n(1,1) & n(1,u)\\
n(u,1) & n(u,u)
\end{array}
\right]  \neq0
\end{align*}
since $n$ is nonsingular, so $\bar{u}-u$ is invertible. \ Now $\eta
(u\otimes1-1\otimes u)=0\oplus(\bar{u}-u)$, so $\eta(K_{K})$ contains
$0\oplus1$,$1\oplus0=\overline{0\oplus1}$, and hence $K\oplus K$.
\end{proof}

\bigskip

We now recall some facts about exterior algebras. \ For more details see
\cite{B88}. \ Let $M$ be a $k$-module and form the exterior algebra
$\Lambda(M)$ with the standard $\mathbb{Z}$-grading%
\[
\Lambda(M)=\sum_{i\geq0}\Lambda_{i}(M),
\]
and write $\left\vert x\right\vert =i$, if $x\in\Lambda_{i}(M)$. \ For
simplicity of notation, we write the product in $\Lambda(M)$ as $xy$ rather
than the usual $x\wedge y$. \ We have $\Lambda(M)_{K}\cong\Lambda(M_{K})$ via
a canonical isomorphism (\cite{B88},III.7.5). \ If $M$ is finitely generated
projective, then so is $\Lambda(M)$ (\cite{B88},III.7.8). \ If $\alpha\in
Hom(M,N)$, then $\alpha$ extends uniquely to a graded algebra homomorphism
$\theta_{\alpha}:\Lambda(M)\rightarrow\Lambda(N)$. \ Also, if $\alpha\in
gl(M)$, then there is a unique extension of $\alpha$ to a derivation
$D_{\alpha}$ of $\Lambda(M)$. \ Thus, $\Lambda(M)$ is a module for the Lie
algebra $gl(M)$ via $(\alpha,x)\rightarrow D_{\alpha}(x)$. \ Similarly, if
$\phi\in M^{\ast}$, then there is a unique extension of $\phi$ to an
anti-derivation (or odd super derivation) $\Delta_{\phi}$ of $\Lambda(M)$.
\ Recall $\Delta$ is an \textit{anti-derivation} if
\[
\Delta(xy)=\Delta(x)y+(-1)^{\left\vert x\right\vert }x\Delta(y)
\]
if $x$ is homogeneous. \ One can show by induction on $i$ that
\begin{equation}
\Delta_{\phi}(\Lambda_{i}(M))\subset\Lambda_{i-1}%
(M),\label{eq: del_phi degree -1}%
\end{equation}
where $\Lambda_{l}(M)=0$ for $l<0$, and $\Delta_{\phi}^{2}=0$. \ Thus, the
universal property for $\Lambda(M^{\ast})$ shows that $\phi\rightarrow
\Delta_{\phi}$ extends to a homomorphism $\Delta:\Lambda(M^{\ast})$ into
$End_{k}(\Lambda(M))$, so we can view $\Lambda(M)$ as a left module for the
associative algebra $\Lambda(M^{\ast})$ with $\xi\cdot x=\Delta_{\xi}(x)$ for
$\xi\in\Lambda(M^{\ast})$, $x\in\Lambda(M)$. \ Using
(\ref{eq: del_phi degree -1}), we see
\[
\Lambda_{i}(M^{\ast})\cdot\Lambda_{j}(M)\subset\Lambda_{j-i}(M).
\]

Let $M$ be a finitely generated projective $k$-module. \ Since $M^{\ast\ast
}=M$, we can reverse the roles of $M$ and $M^{\ast}$ and see that
$\Lambda(M^{\ast})$ is a left module for $\Lambda(M)$ via $x\cdot\xi$. \ Also,
we can identify $\Lambda_{i}(M^{\ast})$ with $\Lambda_{i}(M)^{\ast}$ where
$\xi(x)=\xi\cdot x$ for $\xi\in\Lambda_{i}(M^{\ast})$, $x\in\Lambda_{i}(M)$
(\cite{B88},III.11.5).

For $\alpha\in Hom(M,N)$, let $\alpha^{\ast}\in Hom(N^{\ast},M^{\ast})$ with
$\alpha^{\ast}(\phi)=\phi\alpha$ for $\phi\in N^{\ast}$. \ Thus,
$\alpha\rightarrow-\alpha^{\ast}$ is a Lie algebra homomorphism
$gl(M)\rightarrow gl(M^{\ast})$ and $\Lambda(M^{\ast})$ is a module for
$gl(M)$ via $(\alpha,\xi)\rightarrow D_{-\alpha^{\ast}}(\xi)$.

\bigskip

\begin{lemma}
\label{lem: dot action}Let $l\leq n$ and let $S\subset S_{n}$ be such that
$\sigma\rightarrow\sigma\mid_{\{1,\ldots,l\}}$ is a bijection of $S$ with the
set of all injections
\[
\{1,\ldots,l\}\rightarrow\{1,\ldots,n\}.
\]
For $\phi_{i}\in M^{\ast},m_{j}\in M$, we have%
\[
(\phi_{l}\phi_{l-1}\cdots\phi_{1})\cdot(m_{1}m_{2}\cdots m_{n})=\sum
_{\sigma\in S}(-1)^{\sigma}\phi_{1}(m_{\sigma1})\cdots\phi_{l}(m_{\sigma
l})m_{\sigma(l+1)}\cdots m_{\sigma n}.
\]

\end{lemma}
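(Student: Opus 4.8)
The plan is to reduce the left-hand side to an iterated single contraction and then induct on $l$. Since $\Delta:\Lambda(M^{\ast})\rightarrow End_{k}(\Lambda(M))$ is an algebra homomorphism and the action is $\xi\cdot x=\Delta_{\xi}(x)$, we have $(\phi_{l}\phi_{l-1}\cdots\phi_{1})\cdot x=\Delta_{\phi_{l}}\circ\Delta_{\phi_{l-1}}\circ\cdots\circ\Delta_{\phi_{1}}(x)$, so the statement is really about composing the single anti-derivations $\Delta_{\phi_{i}}$. First I would record the $l=1$ case, the single contraction formula $\Delta_{\phi}(m_{1}\cdots m_{n})=\sum_{i=1}^{n}(-1)^{i-1}\phi(m_{i})\,m_{1}\cdots\widehat{m_{i}}\cdots m_{n}$, which follows by induction on $n$ from $\Delta_{\phi}(1)=0$, $\Delta_{\phi}(m)=\phi(m)$, and the anti-derivation rule $\Delta_{\phi}(m_{1}y)=\phi(m_{1})y-m_{1}\Delta_{\phi}(y)$ applied with $x=m_{1}$ of degree $1$.

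Before the main induction I would note that the right-hand side is independent of the choice of representatives $S$: reordering the tail $m_{\sigma(l+1)}\cdots m_{\sigma n}$ multiplies the monomial by the sign of that reordering and simultaneously multiplies $(-1)^{\sigma}$ by the same sign, because $\Lambda(M)$ is alternating. Hence it suffices to establish the identity for one convenient family $S$, and the induction will in fact manufacture such a family automatically.

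For the main step I induct on $l$, writing $(\phi_{l}\cdots\phi_{1})\cdot x=\Delta_{\phi_{l}}\big((\phi_{l-1}\cdots\phi_{1})\cdot x\big)$. The induction hypothesis expresses $(\phi_{l-1}\cdots\phi_{1})\cdot x$ as a sum over representatives $\sigma'$ of the injections $\{1,\ldots,l-1\}\rightarrow\{1,\ldots,n\}$, of $(-1)^{\sigma'}\phi_{1}(m_{\sigma'1})\cdots\phi_{l-1}(m_{\sigma'(l-1)})\,m_{\sigma'l}\cdots m_{\sigma'n}$. I then apply $\Delta_{\phi_{l}}$ to the degree-$(n-l+1)$ tail $m_{\sigma'l}\cdots m_{\sigma'n}$ by the single contraction formula: contracting the factor $m_{\sigma'p}$, sitting in position $p-l+1$, contributes the sign $(-1)^{p-l}$ together with $\phi_{l}(m_{\sigma'p})$. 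Defining $\sigma$ by $\sigma(i)=\sigma'(i)$ for $i<l$, $\sigma(l)=\sigma'(p)$, and keeping the surviving tail in its inherited order, one computes that $\sigma=\sigma'\circ c$, where $c$ is the $(p-l+1)$-cycle $(l\;p\;p-1\;\cdots\;l+1)$, so that $(-1)^{\sigma}=(-1)^{\sigma'}(-1)^{p-l}$. Thus the contraction sign merges exactly into the permutation sign, and as $\sigma'$ ranges over the hypothesis's representatives and $p$ ranges over $\{l,\ldots,n\}$, the pairs $(\sigma',p)$ biject onto a complete set of representatives of the injections $\{1,\ldots,l\}\rightarrow\{1,\ldots,n\}$; by the independence remark this yields the formula for the given $S$.

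The main obstacle is precisely this sign bookkeeping in the inductive step: one must verify that the factor $(-1)^{p-l}$ produced by contracting the $p$-th tail element is exactly the sign of the cycle that slides that element into position $l$, so that it combines cleanly with $(-1)^{\sigma'}$, and one must confirm that $(\sigma',p)$ traverses each target injection once and only once. Once the cycle computation is nailed down the remainder of the argument is purely formal.
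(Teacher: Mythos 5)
Your proof is correct, but it takes a genuinely different route from the paper's. You argue by induction on $l$: starting from the single-contraction formula for $\Delta_{\phi}$, you apply $\Delta_{\phi_{l}}$ to the inductive expression for $(\phi_{l-1}\cdots\phi_{1})\cdot(m_{1}\cdots m_{n})$ and verify explicitly that the contraction sign $(-1)^{p-l}$ is exactly the sign of the cycle $(l\;p\;p-1\;\cdots\;l+1)$, so it merges into $(-1)^{\sigma}$; combined with your (correct and necessary) observation that each summand is unchanged under a different choice of representative $\sigma$ — reordering the tail changes the monomial and $(-1)^{\sigma}$ by the same sign — and the check that the pairs $(\sigma',p)$ hit each injection exactly once, this closes the induction. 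The paper instead avoids all sign bookkeeping: it expands $\Delta_{\phi_{l}}\cdots\Delta_{\phi_{1}}(m_{1}\cdots m_{n})$, observes that the resulting signs $\varepsilon_{\sigma}$ are universal (they depend only on $\sigma$, not on $M$ or the chosen elements, since the expansion is formal), and then pins down $\varepsilon_{\tau}=(-1)^{\tau}$ by specializing to dual bases of a vector space over a field of characteristic not $2$, where evaluating at $m_{\tau^{-1}1}\cdots m_{\tau^{-1}n}$ kills every term except $\sigma=\tau$. Your induction buys a self-contained, purely combinatorial argument with explicit signs, valid verbatim over any commutative ring without any appeal to universality or to fields; the paper's specialization trick buys brevity, trading your cycle computation for the tacit (though easily believed) claim that the signs are independent of the specialization.
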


\begin{proof}
Applying $\Delta_{\phi_{l}}\cdots\Delta_{\phi_{1}}$ to $m_{1}m_{2}\cdots
m_{n}$, we get terms
\[
\pm\phi_{1}(m_{i_{1}})\cdots\phi_{l}(m_{i_{l}})m_{i_{l+1}}\cdots m_{i_{n}}%
\]
with the sign depending only on $i_{1},\ldots,i_{n}$. \ There is a unique
$\sigma\in S$ with $\sigma(j)=i_{j}$ for $1\leq j\leq l$. \ After suitably
rearranging the factors of $m_{i_{l+1}}\cdots m_{i_{n}}$, we can assume
$i_{j}=\sigma(j)$ for all $j$. \ Thus,%
\[
(\phi_{l}\phi_{l-1}\cdots\phi_{1})\cdot(m_{1}m_{2}\cdots m_{n})=\sum
_{\sigma\in S}\varepsilon_{\sigma}\phi_{1}(m_{\sigma1})\cdots\phi
_{l}(m_{\sigma l})m_{\sigma(l+1)}\cdots m_{\sigma(n)}%
\]
for some $\varepsilon_{\sigma}=\pm1$, depending only on $\sigma$ \ In
particular, if $m_{1},\ldots,m_{n}$ is the basis of a vector space $V$ over a
field of characteristic not $2$ and $\phi_{i}\in V^{\ast}$ with $\phi
_{i}(m_{j})=\delta_{ij}$, then for $\tau\in S$, we have%
\begin{align*}
m_{l+1}\cdots m_{n}  & =(\phi_{l}\cdots\phi_{1})\cdot(m_{1}\cdots m_{n})\\
& =(-1)^{\tau}(\phi_{l}\cdots\phi_{1})\cdot(m_{\tau^{-1}1}\cdots m_{\tau
^{-1}n})\\
& =(-1)^{\tau}\sum_{\sigma\in S}\varepsilon_{\sigma}\phi_{1}(m_{\tau
^{-1}\sigma1})\cdots\phi_{l}(m_{\tau^{-1}\sigma l})m_{\tau^{-1}\sigma
(l+1)}\cdots m_{\tau^{-1}\sigma(n)}\\
& =(-1)^{\tau}\varepsilon_{\tau}m_{l+1}\cdots m_{n}%
\end{align*}
and $\varepsilon_{\tau}=(-1)^{\tau}$.
\end{proof}

\bigskip

We remark that if $l=1$ in Lemma \ref{lem: dot action}, we can take $S=C_{n}$,
the cyclic group generated by the permutation $(1,\ldots,n)$.

If $\alpha\in gl(M)$ and $\phi\in M^{\ast}$, then $[D_{\alpha},\Delta_{\phi}]$
is an antiderivation with%
\[
\lbrack D_{\alpha},\Delta_{\phi}](m)=D_{\alpha}(\phi(m))-\phi(\alpha
m)=\Delta_{-\alpha^{\ast}(\phi)}(m),
\]
for $m\in M$ \ \ Thus, $[D_{\alpha},\Delta_{\phi}]=\Delta_{-\alpha^{\ast}%
(\phi)}=\Delta_{D_{-\alpha^{\ast}}(\phi)}$. \ Since $\Delta$ is a
homomorphism, we have
\[
\lbrack D_{\alpha},\Delta_{\xi}]=\Delta_{D_{-\alpha^{\ast}}(\xi)}%
\]
for all $\xi\in\Lambda(M^{\ast})$, so%
\begin{equation}
D_{\alpha}(\xi\cdot x)=D_{-\alpha^{\ast}}(\xi)\cdot x+\xi\cdot D_{\alpha
}(x),\label{eq: derivation of dot}%
\end{equation}
for all $x\in\Lambda(M)$.

\bigskip

Let \ $M$ be finitely generated projective. \ For $x\in\Lambda_{l}(M),\xi
\in\Lambda_{l}(M^{\ast})$, define $e(x,\xi)\in End(M)$ by%
\[
e(x,\xi)(m)=(m\cdot\xi)\cdot x\in\Lambda_{l-1}(M^{\ast})\cdot\Lambda
_{l}(M)\subset M
\]
for $m\in M$. \ We also have $e(\xi,x)\in End(M^{\ast})$.

\begin{lemma}
\label{lem: e properties}Let $M$ be a finitely generated projective module,
and let $x,y,z\in\Lambda_{l}(M)$, $\xi\in\Lambda_{l}(M^{\ast})$, and $\mu
\in\Lambda_{3l}(M^{\ast})$. \ We have

\qquad(i) $x\cdot\xi=\xi\cdot x,$

\qquad(ii) $e(x,\xi)^{\ast}=e(\xi,x),$

\qquad(iii) if $\phi_{1},\ldots,\phi_{l}\in M^{\ast}$, then%
\[
D_{e(x,\phi_{1}\cdots\phi_{l})}=\sum_{\sigma\in C_{l}}(-1)^{\sigma}%
((\phi_{\sigma2}\cdots\phi_{\sigma l})\cdot x)\Delta_{\phi_{\sigma1}},
\]
where $C_{l}$ is the cyclic group generated by the permutation $(1,\ldots,l) $,

\qquad(iv) $tr(e(x,\xi))=l\xi\cdot x,$

\qquad(v) \ $e(xyz,\mu)=\sum\limits_{x,y,z\circlearrowleft}e(x,(yz)\cdot\mu)$,
where the sum is over all cyclic permutations of $x,y,z,$

\qquad(vi) \ if $l=3$, then $\xi\cdot(xy)=(\xi\cdot x)y-D_{e(x,\xi
)}y+D_{e(y,\xi)}x-(\xi\cdot y)x.$
\end{lemma}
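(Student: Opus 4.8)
For part (vi) the plan is to reduce to a decomposable $\xi$ and then expand both sides by the anti-derivation rule, matching terms via (iii); no localization is needed. Both sides of (vi) are $k$-linear in $\xi$: the left side $\xi\cdot(xy)=\Delta_{\xi}(xy)$ because $\xi\mapsto\Delta_{\xi}$ is linear, and on the right $\xi\cdot x$, $\xi\cdot y$, $e(x,\xi)$, $e(y,\xi)$ are linear in $\xi$ by their defining formulas, hence so are $D_{e(x,\xi)}$ and $D_{e(y,\xi)}$. Since $\Lambda_{3}(M^{\ast})$ is spanned by decomposables $\phi_{1}\phi_{2}\phi_{3}$ with $\phi_{i}\in M^{\ast}$, it suffices to treat $\xi=\phi_{1}\phi_{2}\phi_{3}$. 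Writing $\partial_{i}=\Delta_{\phi_{i}}$, we then have $\Delta_{\xi}=\partial_{1}\partial_{2}\partial_{3}$, each $\partial_{i}$ is an anti-derivation lowering degree by one, and $\partial_{i}\partial_{j}=-\partial_{j}\partial_{i}$ since $\Delta$ is a homomorphism and $\phi_{i}\phi_{j}=-\phi_{j}\phi_{i}$.

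First I would expand the left side. Applying the rule $\partial(uv)=\partial(u)v+(-1)^{|u|}u\,\partial(v)$ three times to $\partial_{1}\partial_{2}\partial_{3}(xy)$ with $|x|=|y|=3$ produces eight terms. Two of them are the pure terms $\partial_{1}\partial_{2}\partial_{3}(x)\,y=(\xi\cdot x)y$ and $-x\,\partial_{1}\partial_{2}\partial_{3}(y)=-(\xi\cdot y)x$, using $\partial_{1}\partial_{2}\partial_{3}(x)=\xi\cdot x\in\Lambda_{0}(M)=k$. The remaining six each apply a second-order operator to one factor and a first-order operator to the other; I would split them into the three in which $x$ is differentiated twice (and $y$ once) and the three in which $y$ is differentiated twice (and $x$ once).

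Next I would expand the right side using (iii). As $C_{3}$ consists of even permutations, the signs are trivial, giving
\[
D_{e(x,\xi)}y=(\partial_{2}\partial_{3}x)\,\partial_{1}y+(\partial_{3}\partial_{1}x)\,\partial_{2}y+(\partial_{1}\partial_{2}x)\,\partial_{3}y,
\]
and symmetrically for $D_{e(y,\xi)}x$. Matching $-D_{e(x,\xi)}y$ against the "$x$ differentiated twice" terms and $D_{e(y,\xi)}x$ against the "$y$ differentiated twice" terms then finishes the proof, once one rewrites the middle summand using $\partial_{3}\partial_{1}=-\partial_{1}\partial_{3}$ and reorders the wedge factors of $D_{e(y,\xi)}x$ by the graded-commutativity relation $uv=vu$ for $u\in\Lambda_{1}(M)$, $v\in\Lambda_{2}(M)$.

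The computation is mechanical, and the only real obstacle is the sign bookkeeping in the threefold anti-derivation expansion. The decisive point is that the middle term of (iii) carries $\partial_{3}\partial_{1}x$ rather than $\partial_{1}\partial_{3}x$: the relation $\partial_{i}\partial_{j}=-\partial_{j}\partial_{i}$ is precisely what reconciles its sign with the corresponding term coming from the left side, so I expect the sign reconciliation to be where all the care is needed.
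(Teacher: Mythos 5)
Your computation for part (vi) is correct, and it is essentially the paper's own argument: the paper likewise reduces to a decomposable $\xi=\phi_{1}\phi_{2}\phi_{3}$ and writes $\xi\cdot(xy)$ as the four groups of terms $(\xi\cdot x)y$, $-\sum_{\sigma\in C_{3}}(-1)^{\sigma}((\phi_{\sigma1}\phi_{\sigma2})\cdot x)(\phi_{\sigma3}\cdot y)$, $+\sum_{\sigma\in C_{3}}(-1)^{\sigma}(\phi_{\sigma1}\cdot x)((\phi_{\sigma2}\phi_{\sigma3})\cdot y)$, $-x(\xi\cdot y)$, which is exactly your eight-term anti-derivation expansion regrouped and matched against (iii). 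Your sign bookkeeping checks out, including the two points you flag: the middle term of (iii) carrying $\partial_{3}\partial_{1}x$ rather than $\partial_{1}\partial_{3}x$, and the commutation $uv=vu$ for $u\in\Lambda_{1}(M)$, $v\in\Lambda_{2}(M)$ needed to compare $D_{e(y,\xi)}x$ with the terms in which $y$ is differentiated twice. The reduction to decomposables is also legitimate, since $\Lambda_{3}(M^{\ast})$ is spanned by products $\phi_{1}\phi_{2}\phi_{3}$ for any module.

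The genuine gap is one of scope: the statement is the whole of Lemma \ref{lem: e properties}, parts (i) through (vi), and you prove only (vi). Moreover, your argument for (vi) explicitly invokes (iii), which you have not established, so even (vi) is not self-contained. Part (iii) is not a formality: the paper derives it from Lemma \ref{lem: dot action} (via the remark that for $l=1$ one may take $S=C_{n}$) together with the observation that $m\Delta_{\phi}$ is a derivation of $\Lambda(M)$ and hence equals $D_{m\phi}$. The remaining parts also need arguments, which the paper supplies: (i) follows from Lemma \ref{lem: dot action} applied to both module actions, (ii) is a direct computation with the identities $\phi\cdot(m\cdot\xi)$ rearranged by graded commutativity, (iv) follows from (iii) together with $tr(m\phi)=\phi(m)$, and (v) is proved by evaluating $e(\mu,xyz)$ on $\phi\in M^{\ast}$ and then dualizing by means of (ii). As a proof of the lemma as stated, your proposal is therefore incomplete, although the one part you do treat is handled correctly and by the same route as the paper.
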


\begin{proof}
Using Lemma \ref{lem: dot action}, we have%
\begin{align*}
(\phi_{l}\phi_{l-1}\cdots\phi_{1})\cdot(m_{1}m_{2}\cdots m_{l})  &
=\sum_{\sigma\in S_{l}}(-1)^{\sigma}\phi_{1}(m_{\sigma1})\cdots\phi
_{l}(m_{\sigma l})\\
& =\sum_{\sigma\in S_{l}}(-1)^{\sigma}m_{\sigma1}(\phi_{1})\cdots m_{\sigma
l}(\phi_{l})\\
& =\sum_{\sigma\in S_{l}}(-1)^{\sigma}m_{1}(\phi_{\sigma1})\cdots m_{l}%
(\phi_{\sigma l})\\
& =(m_{1}m_{2}\cdots m_{l})\cdot(\phi_{l}\phi_{l-1}\cdots\phi_{1})
\end{align*}
for $m_{i}\in M,\phi_{i}\in M^{\ast}$, showing (i). \ For $\phi\in M^{\ast
},m\in M$, we have%
\begin{align*}
(e(x,\xi)^{\ast}(\phi))(m)  & =\phi(e(x,\xi)(m))\\
& =\phi\cdot((m\cdot\xi)\cdot x)=(\phi(m\cdot\xi))\cdot x\\
& =(-1)^{l-1}((m\cdot\xi)\phi)\cdot x=(-1)^{l-1}(m\cdot\xi)\cdot(\phi\cdot
x)\\
& =(-1)^{l-1}(\phi\cdot x)\cdot(m\cdot\xi)=m\cdot((\phi\cdot x)\cdot\xi)\\
& =(e(\xi,x)(\phi))(m)
\end{align*}
showing (ii).

If $m\in M,\phi\in M^{\ast}$ it is easy to see that $m\Delta_{\phi
}:x\rightarrow m(\phi\cdot x)$ is a derivation of $\Lambda(M)$, so
$m\Delta_{\phi}=D_{m\phi}$. \ By Lemma \ref{lem: dot action}, we have%
\begin{align*}
e(x,\phi_{1}\cdots\phi_{l})m  & =(m\cdot(\phi_{1}\cdots\phi_{l}))\cdot x\\
& =\sum_{\sigma\in C_{l}}(-1)^{\sigma}((m\cdot\phi_{\sigma1})(\phi_{\sigma
2}\cdots\phi_{\sigma l}))\cdot x\\
& =\sum_{\sigma\in C_{l}}(-1)^{\sigma}((\phi_{\sigma2}\cdots\phi_{\sigma
l})\cdot x)\Delta_{\phi_{\sigma1}}(m),
\end{align*}
for $m\in M$, and (iii) follows. \ Also,
\begin{align*}
tr(e(x,\phi_{1}\cdots\phi_{l}))  & =\sum_{\sigma\in C_{l}}(-1)^{\sigma}%
\phi_{\sigma1}((\phi_{\sigma2}\cdots\phi_{\sigma l})\cdot x)\\
& =\sum_{\sigma\in C_{l}}(-1)^{\sigma}\phi_{\sigma1}\cdot((\phi_{\sigma
2}\cdots\phi_{\sigma l})\cdot x)\\
& =\sum_{\sigma\in C_{l}}(-1)^{\sigma}(\phi_{\sigma1}\phi_{\sigma2}\cdots
\phi_{\sigma l})\cdot x\\
& =l(\phi_{1}\cdots\phi_{l})\cdot x,
\end{align*}
showing (iv). \ For (v), we see%
\[
\phi\cdot(xyz)=(\phi\cdot x)yz+(-1)^{l}x(\phi\cdot y)z+xy(\phi\cdot
z)=\sum\limits_{x,y,z\circlearrowleft}(\phi\cdot x)yz,
\]
for $\phi\in M^{\ast}$, so%
\begin{align*}
e(\mu,xyz)\phi & =(\sum\limits_{x,y,z\circlearrowleft}(\phi\cdot x)yz)\cdot
\mu\\
& =\sum\limits_{x,y,z\circlearrowleft}(\phi\cdot x)\cdot((yz)\cdot\mu)\\
& =\sum\limits_{x,y,z\circlearrowleft}e((yz)\cdot\mu,x)\phi.
\end{align*}
Thus, $e(\mu,xyz)=\sum\limits_{x,y,z\circlearrowleft}e((yz)\cdot\mu,x)$, and
(v) follows from (ii). \ Finally, if $\xi=\phi_{1}\phi_{2}\phi_{3}$, then%
\begin{align*}
\xi\cdot(xy)  & =(\xi\cdot x)y-\sum_{\sigma\in C_{3}}(-1)^{\sigma}%
((\phi_{\sigma1}\phi_{\sigma2})\cdot x)(\phi_{\sigma3}\cdot y)\\
& +\sum_{\sigma\in C_{3}}(-1)^{\sigma}(\phi_{\sigma1}\cdot x)((\phi_{\sigma
2}\phi_{\sigma3})\cdot y)-x(\xi\cdot y)\\
& =(\xi\cdot x)y-D_{e(x,\xi)}y+D_{e(y,\xi)}x-(\xi\cdot y)x,
\end{align*}
showing (vi).
\end{proof}

\bigskip

\begin{lemma}
\label{lem: rank n}Let $M$ be a finitely generated projective module of rank
$n$.

(i) $(x\cdot\mu)\cdot u=(\mu\cdot u)x$, for $x\in\Lambda(M)$, $u\in\Lambda
_{n}(M)$, $\mu\in\Lambda_{n}(M^{\ast})$.

(ii) The following are equivalent:

\qquad(a) there exist $u\in\Lambda_{n}(M)$ and $\mu\in\Lambda_{n}(M^{\ast})$
with $\mu\cdot u=1$,

\qquad(b) \ $\Lambda_{n}(M)$ is free of rank $1$.

(iii) $D_{\alpha}(u)=tr(\alpha)u$ for $\alpha\in gl(M)$, $u\in\Lambda_{n}(M)$.
\end{lemma}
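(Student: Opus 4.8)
The plan is to reduce each part to the case of a free module via the localization principle of \S 2, and then to argue in a basis; part (ii) will then follow formally from part (i) together with the identification $\Lambda_{n}(M^{\ast})\cong\Lambda_{n}(M)^{\ast}$ recalled above.

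For (i), note that the difference $(x\cdot\mu)\cdot u-(\mu\cdot u)x$ is $k$-linear in $x$ and $k$-bilinear in $(\mu,u)$, hence defines a $k$-multilinear map into the finitely generated projective module $\Lambda(M)$; since the dot products are instances of the multilinear maps of \S 2, they commute with base change. Thus by the localization criterion it suffices to prove the identity after replacing $k$ by $k_{\mathfrak{p}}$, i.e.\ to assume $M$ is free with basis $e_{1},\dots,e_{n}$ and dual basis $e_{1}^{\ast},\dots,e_{n}^{\ast}$. Then $\Lambda_{n}(M)$ and $\Lambda_{n}(M^{\ast})$ are free of rank $1$ on $u_{0}=e_{1}\cdots e_{n}$ and $\mu_{0}=e_{n}^{\ast}\cdots e_{1}^{\ast}$, and Lemma \ref{lem: dot action} with $l=n$ gives $\mu_{0}\cdot u_{0}=1$. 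Writing $u=cu_{0}$, $\mu=d\mu_{0}$ and using bilinearity, everything reduces to the single Poincar\'{e}-duality statement $(x\cdot\mu_{0})\cdot u_{0}=x$ for all $x\in\Lambda(M)$. To prove this I would take $x=e_{i_{1}}\cdots e_{i_{p}}$ a basis monomial: then $x\cdot\mu_{0}$ is, up to a sign, the complementary monomial in the $e_{j}^{\ast}$, and contracting that against $u_{0}$ returns $e_{i_{1}}\cdots e_{i_{p}}$ up to a sign. The point is that the two signs delivered by Lemma \ref{lem: dot action} agree, so their product is $+1$ and $x$ is recovered exactly. This sign bookkeeping is the main obstacle; the rest is formal.

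Granting (i), part (ii) is short. For (a)$\Rightarrow$(b): for $x\in\Lambda_{n}(M)$ the element $x\cdot\mu$ lies in $\Lambda_{0}(M^{\ast})=k$, so (i) gives $(x\cdot\mu)u=(x\cdot\mu)\cdot u=(\mu\cdot u)x=x$; thus every $x$ is a multiple of $u$, so $u$ generates $\Lambda_{n}(M)$, while $cu=0$ forces $c=c(\mu\cdot u)=\mu\cdot(cu)=0$, so $c\mapsto cu$ is injective and $\Lambda_{n}(M)=ku$ is free of rank $1$. For (b)$\Rightarrow$(a): under the identification $\Lambda_{n}(M^{\ast})\cong\Lambda_{n}(M)^{\ast}$ with $\mu(x)=\mu\cdot x$, the basis $u$ of the free rank-$1$ module $\Lambda_{n}(M)$ has a dual basis vector $\mu\in\Lambda_{n}(M^{\ast})$ satisfying $\mu\cdot u=\mu(u)=1$.

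Finally, (iii) is independent of the others: reducing again to $M$ free by localization (both $D$ and $tr$ commute with base change), and using that $D_{\alpha}$ is the derivation extending $\alpha$, I compute $D_{\alpha}(e_{1}\cdots e_{n})=\sum_{j}e_{1}\cdots(\alpha e_{j})\cdots e_{n}$; writing $\alpha e_{j}=\sum_{i}a_{ij}e_{i}$, every term with $i\neq j$ has a repeated factor and vanishes, leaving $\left(\sum_{j}a_{jj}\right)u_{0}=tr(\alpha)u_{0}$. As this holds on the generator $u_{0}$ and both sides are linear in $u$, (iii) follows.
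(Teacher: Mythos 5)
Your overall strategy coincides with the paper's: reduce each part to the free case by the localization principle of \S 2, compute in a basis, and deduce (ii) formally from (i) together with the identification $\Lambda_{n}(M^{\ast})\cong\Lambda_{n}(M)^{\ast}$. Your parts (ii) and (iii) are complete and correct, and essentially identical to the paper's (for (iii) the paper packages the localization as the vanishing of the linear map $\alpha\otimes u\mapsto D_{\alpha}(u)-tr(\alpha)u$ and leaves your explicit free-case computation implicit).

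The gap is in part (i), and it sits exactly at the point you yourself flag as ``the main obstacle'': after reducing to the statement $(x\cdot\mu_{0})\cdot u_{0}=x$ for a basis monomial $x=e_{i_{1}}\cdots e_{i_{p}}$, you assert that the sign produced by contracting $x$ against $\mu_{0}$ cancels the sign produced by contracting the complementary monomial against $u_{0}$, but you never verify it, and nothing you write rules out getting $-x$ for some monomials. Since everything else in (i) is formal, this unproved cancellation is the entire content of the identity. The paper arranges matters so that no signs ever arise: both sides of (i) are bilinear in $(\mu,u)$ and linear in $x$, and relabelling the basis by a permutation $\pi$ multiplies $m_{n}\cdots m_{1}$ and $\phi_{1}\cdots\phi_{n}$ by the same factor $(-1)^{\pi}$, so these signs cancel; hence one may assume simultaneously
\[
x=m_{l}\cdots m_{1},\qquad u=m_{n}\cdots m_{1},\qquad \mu=\phi_{1}\cdots\phi_{n}.
\]
With these orderings, Lemma \ref{lem: dot action} (choosing $S$ to contain the identity permutation, so that only the identity term survives against the dual basis) gives
\[
(m_{l}\cdots m_{1})\cdot(\phi_{1}\cdots\phi_{n})=\phi_{l+1}\cdots\phi_{n},\qquad
(\phi_{l+1}\cdots\phi_{n})\cdot(m_{n}\cdots m_{1})=m_{l}\cdots m_{1},
\]
both with coefficient $+1$, whence $(x\cdot\mu)\cdot u=x=(\mu\cdot u)x$ with no bookkeeping at all. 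If you insist on your normalization $u_{0}=e_{1}\cdots e_{n}$, $\mu_{0}=e_{n}^{\ast}\cdots e_{1}^{\ast}$ with a general monomial $x=e_{i_{1}}\cdots e_{i_{p}}$, then the two shuffle signs must actually be computed and compared; until that is done, your proof of (i) -- and hence of the implication (a)$\Rightarrow$(b) in (ii), which relies on it -- is incomplete.
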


\begin{proof}
We first show (i) in case $M$ is a free module of rank $n$. \ Since
$\Lambda_{n}(M)$ is free of rank $1$, we may assume $x=m_{l}\cdots m_{1}$,
$u=m_{n}\cdots m_{1}$, and $\mu=\phi_{1}\cdots\phi_{n}$ where $m_{1}%
,\ldots,m_{n}$ is a basis for $M$ and $\phi_{1},\ldots,\phi_{n}$ is the dual
basis of $M^{\ast}$; i.e., $\phi_{i}(m_{j})=\delta_{ij}$. \ We have\
\begin{align*}
((m_{l}\cdots m_{1})\cdot(\phi_{1}\cdots\phi_{n}))\cdot(m_{n}\cdots m_{1})  &
=(\phi_{l+1}\cdots\phi_{n})\cdot(m_{n}\cdots m_{1})\\
& =m_{l}\cdots m_{1}\\
& =((\phi_{1}\cdots\phi_{n})\cdot(m_{n}\cdots m_{1}))m_{l}\cdots m_{1},
\end{align*}
showing (i) in this case. \ To show the general case, we observe that
$\Lambda(M)$, $\Lambda_{n}(M^{\ast})$, and $\Lambda_{n}(M)$ are finitely
generated projective, and that we can identify $\Lambda_{l}(M)_{\mathfrak{p}}$
with $\Lambda_{l}(M_{\mathfrak{p}})$. \ Since the trilinear identity (i) holds
for the free $k_{\mathfrak{p}}$-module $M_{\mathfrak{p}}$ of rank $n$ for each
$\mathfrak{p}$, it holds for $M$.

If (a) holds, then $q=(q\cdot\mu)\cdot u=(\mu\cdot q)u$ for $q\in\Lambda
_{n}(M)$ by (i). \ Thus, $q\rightarrow\mu\cdot q$ is a linear map $\Lambda
_{n}(M)\rightarrow k$ with inverse $a\rightarrow au$, and (b) holds.
\ Conversely, if $\mu:\Lambda_{n}(M)\rightarrow k$\ is an isomorphism, then
$\mu\in\Lambda_{n}(M)^{\ast}=\Lambda_{n}(M^{\ast})$ and $\mu\cdot u=\mu(u)=1$,
so (a) holds, showing (ii). \ Let%
\[
\lambda:gl(M)\otimes\Lambda_{n}(M)\rightarrow\Lambda_{n}(M)
\]
be the linear map with $\lambda(\alpha\otimes u)=D_{\alpha}(u)-tr(\alpha)u$.
\ Since (iii) holds for free modules, $\lambda_{\mathfrak{p}}=0$ for all prime
ideals $\mathfrak{p}$ of $k$, so $\lambda=0$ and (iii) holds.
\end{proof}

\bigskip

We remark that if condition (ii)(a) in Lemma \ref{lem: rank n} holds, then
$\{u\}$ is a basis for $\Lambda_{n}(M)$, $\{\mu\}$ is a basis for $\Lambda
_{n}(M^{\ast})$, and $\mu$ is uniquely determined by $u$.

\bigskip

\section{Constructions of Lie algebras}

Let $M$ be a finitely generated projective module of rank $9$ and suppose
there exist $u\in\Lambda_{9}(M)$ and $\mu\in\Lambda_{9}(M^{\ast})$ with
$\mu\cdot u=1$. \ The Lie algebra $gl(M)$ acts on $\Lambda_{3}(M)$ via
$\rho_{M}:\alpha\rightarrow D_{\alpha}\mid_{\Lambda_{3}(M)}$. \ Clearly,
$\widetilde{gl}(M):=\rho_{M}(gl(M))+kId_{\Lambda_{3}(M)}$ is a Lie algebra.
\ Since $\rho_{M}(Id_{M})=3Id_{\Lambda_{3}(M)}$, we see that $\widetilde
{gl}(M)=\rho_{M}(gl(M))$ if $\frac{1}{3}\in k$. \ Suppose $\beta\in
gl(\Lambda_{3}(M))$ extends to a derivation $d_{\beta}$ of the subalgebra
\[
\Lambda_{(3)}(M):=k\oplus\Lambda_{3}(M)\oplus\Lambda_{6}(M)\oplus\Lambda
_{9}(M)
\]
of $\Lambda(M)$. \ Since $\beta$ uniquely determines $d_{\beta}$, we can
define $T(\beta)=\mu\cdot d_{\beta}(u)$. \ If $\alpha\in gl(M)$, then
$\rho_{M}(\alpha)$ and $Id_{\Lambda_{3}(M)}$ extend to derivations of
$\Lambda_{(3)}(M)$ with $d_{\rho_{M}(\alpha)}=D_{\alpha}\mid_{\Lambda
_{(3)}(M)}$ and $d_{Id_{\Lambda_{3}(M)}}(x)=rx$ for $x\in\Lambda_{3r}(M)$.
\ Thus, each $\beta\in\widetilde{gl}(M)$ extends to a derivation $d_{\beta}$
of $\Lambda_{(3)}(M)$, and we have defined a linear map $T:\widetilde
{gl}(M)\rightarrow k$ with $T(\rho_{M}(\alpha))=tr(\alpha)$ by Lemma
\ref{lem: rank n}(iii) and $T(Id_{\Lambda_{3}(M)})=3$. \ Set $\widetilde
{sl}(M)=\{\beta\in\widetilde{gl}(M):T(\beta)=0\}$, so $\widetilde{sl}%
(M)=\rho_{M}(sl(M))$ if $\frac{1}{3}\in k$. \ Note that
\[
\lbrack\widetilde{gl}(M),\widetilde{gl}(M)]\subset\rho_{M}%
([gl(M),gl(M)])\subset\rho_{M}(sl(M))\subset\widetilde{sl}(M),
\]
so $\widetilde{sl}(M)$ is an ideal of $\widetilde{gl}(M)$. \ Note that
$\widetilde{gl}(M)$ is a Lie algebra of linear transformations of $\Lambda
_{3}(M)$ with the contragredient action on $\Lambda_{3}(M)^{\ast}=\Lambda
_{3}(M^{\ast})$. \ In particular, (\ref{eq: derivation of dot}) shows
\begin{equation}
\rho_{M}(\alpha)^{\ast}=D_{\alpha^{\ast}}\mid_{\Lambda_{3}(M^{\ast})}%
=\rho_{M^{\ast}}(\alpha^{\ast})\text{ for }\alpha\in
gl(M).\label{eq: rho star}%
\end{equation}

\bigskip

\begin{theorem}
\label{thm: split}Let $M$ be a finitely generated projective module of rank
$9$ and suppose there exist $u\in\Lambda_{9}(M)$ and $\mu\in\Lambda
_{9}(M^{\ast})$ with $\mu\cdot u=1$. \ Then
\[
\mathcal{G}(M,u)=\widetilde{sl}(M)\oplus\Lambda_{3}(M)\oplus\Lambda
_{3}(M^{\ast})
\]
is a Lie algebra with skew symmetric product given by
\begin{align*}
\lbrack\alpha,\beta]  & =\alpha\beta-\beta\alpha,\\
\lbrack\alpha,x]  & =\alpha(x),\ [\alpha,\xi]=-\alpha^{\ast}(\xi),\\
\lbrack x,y]  & =(xy)\cdot\mu,\ [\xi,\psi]=(\xi\psi)\cdot u,\\
\lbrack x,\xi]  & =\delta(x,\xi):=\rho(e(x,y))-(x\cdot\xi)Id_{\Lambda_{3}(M)}%
\end{align*}
for $\alpha,\beta\in\widetilde{sl}(M)$, $x,y\in\Lambda_{3}(M)$, and $\xi
,\psi\in\Lambda_{3}(M^{\ast})$.
\end{theorem}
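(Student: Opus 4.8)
The plan is to check three things: that each product lands in the stated summand, that the product is alternating, and that the Jacobi identity holds; $k$-bilinearity is automatic, since every ingredient is assembled from $k$-(multi)linear operations on finitely generated projective modules. For well-definedness, the degree rule $\Lambda_i(M^*)\cdot\Lambda_j(M)\subset\Lambda_{j-i}(M)$ gives $[x,y]=(xy)\cdot\mu\in\Lambda_3(M^*)$ and $[\xi,\psi]=(\xi\psi)\cdot u\in\Lambda_3(M)$, while $\delta(x,\xi)\in\widetilde{sl}(M)$ because $T(\delta(x,\xi))=tr(e(x,\xi))-3(x\cdot\xi)=3(\xi\cdot x)-3(x\cdot\xi)=0$ by $T\circ\rho_M=tr$, $T(Id_{\Lambda_3(M)})=3$, and Lemma~\ref{lem: e properties}(i),(iv). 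Alternation is immediate on $\widetilde{sl}(M)$, follows for the two contraction brackets from $xy=(-1)^{9}yx=-yx$ in $\Lambda(M)$ and its dual in $\Lambda(M^*)$, and is imposed by definition on the mixed pairs.

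The decisive observation is that the decomposition is a $\mathbb{Z}/3$-grading, with $\widetilde{sl}(M)$, $\Lambda_3(M)$, $\Lambda_3(M^*)$ of degrees $0$, $1$, $2$; one checks directly that the six bracket types add degrees mod $3$. Thus the Jacobi identity need only be verified on homogeneous triples, i.e. on the ten degree-multisets, and the four multisets containing a $0$ are representation-theoretic. The $(0,0,0)$ triple is Jacobi in the Lie algebra $\widetilde{sl}(M)$; the $(0,0,1)$ and $(0,0,2)$ triples assert that $\Lambda_3(M)$ and $\Lambda_3(M^*)$ are $\widetilde{sl}(M)$-modules via $\alpha\mapsto\alpha$ and $\alpha\mapsto-\alpha^*$, which is immediate from $(\alpha\beta)^*=\beta^*\alpha^*$. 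The $(0,1,1)$ triple is the equivariance of $(x,y)\mapsto(xy)\cdot\mu$; applying the dual of (\ref{eq: derivation of dot}) to $(xy)\cdot\mu$ and evaluating $D_{-\alpha^{*}}$ on the top form $\mu$ via Lemma~\ref{lem: rank n}(iii) produces a trace term that is annihilated exactly by the condition $T(\alpha)=0$ defining $\widetilde{sl}(M)$ --- this is the case that forces $\mathfrak{g}_0=\widetilde{sl}(M)$ rather than $\widetilde{gl}(M)$. For the $(0,1,2)$ triple the scalar parts of $\delta$ cancel by adjointness of $\alpha$ and $\alpha^*$, reducing it to the equivariance $[\gamma,e(x,\xi)]=e(D_\gamma x,\xi)+e(x,D_{-\gamma^*}\xi)$, which I would obtain by applying (\ref{eq: derivation of dot}) twice to $e(x,\xi)(m)=(m\cdot\xi)\cdot x$.

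The substantive cases are the homogeneous mixed ones. For $(1,1,1)$ I would evaluate $\sum_{x,y,z\circlearrowleft}\delta(x,(yz)\cdot\mu)$: Lemma~\ref{lem: e properties}(v) collapses the $\rho_M$-part to $\rho_M(e(xyz,\mu))$, and since $xyz=(\mu\cdot(xyz))\,u$ and $e(u,\mu)=Id_M$ (from Lemma~\ref{lem: rank n}(i) and $\mu\cdot u=1$) this equals $3(\mu\cdot(xyz))\,Id_{\Lambda_3(M)}$; the scalar part equals the same, because $x\cdot((yz)\cdot\mu)=(xyz)\cdot\mu$ by associativity of the contraction action and $xyz=yzx=zxy$, so the two contributions cancel. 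For $(1,1,2)$, writing $\delta(x,\xi)(y)=D_{e(x,\xi)}y-(x\cdot\xi)y$, Lemma~\ref{lem: e properties}(vi) collapses $-\delta(y,\xi)(x)+\delta(x,\xi)(y)$ to $-\xi\cdot(xy)$, which is cancelled by $[\xi,[x,y]]=(\xi\,((xy)\cdot\mu))\cdot u=\xi\cdot(((xy)\cdot\mu)\cdot u)=\xi\cdot(xy)$, using associativity and $((xy)\cdot\mu)\cdot u=xy$ from Lemma~\ref{lem: rank n}(i). The remaining $(2,2,2)$ and $(1,2,2)$ triples then follow by the $M\leftrightarrow M^*$ duality interchanging $u$ and $\mu$, which is compatible with the brackets through Lemma~\ref{lem: e properties}(i),(ii).

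I expect the main obstacle to be the bookkeeping in $(1,1,1)$ and $(1,1,2)$ and their duals: one must carry the trace-correction term $(x\cdot\xi)Id_{\Lambda_3(M)}$ through the computation and repeatedly use $\mu\cdot u=1$ to collapse the top-degree contractions, via $e(u,\mu)=Id_M$ and $((xy)\cdot\mu)\cdot u=xy$. Because Lemmas~\ref{lem: dot action}--\ref{lem: rank n} are established for all finitely generated projective modules, no localization and no restriction on $\mathrm{char}\,k$ is required; alternatively one could reduce to the free case through the localization principle of \S2, but the identities above already hold verbatim over $k$.
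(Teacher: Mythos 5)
Your proposal is correct and follows essentially the same route as the paper's own proof: the same reduction to Jacobi on homogeneous triples (your $\mathbb{Z}/3$-grading observation is the paper's case analysis plus its skew-symmetry/permutation remark), the same use of Lemma~\ref{lem: e properties}(iv) for well-definedness, of (\ref{eq: derivation of dot}) and the trace/top-form argument for the triples involving $\widetilde{sl}(M)$, of Lemma~\ref{lem: e properties}(vi) for $(1,1,2)$, of Lemma~\ref{lem: e properties}(v) with Lemma~\ref{lem: rank n}(i) for $(1,1,1)$, and the same $M\leftrightarrow M^{\ast}$ duality for the remaining cases. The only blemish is bookkeeping: six multisets (not four) contain degree $0$, and $(0,2,2)$ is never explicitly assigned, though it follows from $(0,1,1)$ by the same duality you invoke for $(1,2,2)$ and $(2,2,2)$.
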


\begin{proof}
We recall that Lemma \ref{lem: rank n}(ii) shows that $\mu$ is uniquely
determined by $u$. \ Also, Lemma \ref{lem: e properties}(iv) shows that
$\delta(x,y)\in\widetilde{sl}(M)$. \ It suffices to check the Jacobi identity
\[
J(z_{1},z_{2},z_{3})=[[z_{1}z_{2}]z_{3}]+[[z_{2}z_{3}]z_{1}]+[[z_{3}%
z_{1}]z_{2}]=0
\]
for $z_{i}\in\widetilde{sl}(M)\cup\Lambda_{3}(M)\cup\Lambda_{3}(M^{\ast})$.
\ Moreover, since the product is skew-symmetric,
\[
J(z_{1},z_{2},z_{3})=0\text{ implies }J(z_{\pi1},z_{\pi2},z_{\pi3})=0
\]
for any $\pi\in S_{3}$. \ Since $\widetilde{sl}(M)$ is a Lie algebra of linear
transformations of $\Lambda_{3}(M)$ with the contragredient action on
$\Lambda_{3}(M)^{\ast}=\Lambda_{3}(M^{\ast})$, the Jacobi identity holds if
two or more of $z_{i}$ are in $\widetilde{sl}(M)$. \ Interchanging the roles
of $M$ and $M^{\ast}$, if necessary, we are left with the following cases with
$\alpha\in\widetilde{sl}(M)$, $x,y,z\in\Lambda_{3}(M)$, $\xi\in\Lambda
_{3}(M^{\ast})$:

Case 1: $J(\alpha,x,\xi)$. \ We know that $gl(M)$ acts as derivations of
$\Lambda(M)$ via $\gamma\rightarrow D_{\gamma}$, and as derivations of
$\Lambda(M^{\ast})$ via $\gamma\rightarrow-D_{\gamma^{\ast}}$. \ \ Also, these
actions are derivations of the products $\Lambda(M^{\ast})\cdot\Lambda(M)$ and
$\Lambda(M)\cdot\Lambda(M^{\ast})$ by (\ref{eq: derivation of dot}). \ Thus,
$gl(M)$ acts as derivations of the triple product
\[
\delta(x,\xi)(y)=D_{e(x,\xi)}(y)-(x\cdot\xi)y.
\]
Now $End(\Lambda_{3}(M))$ acts on $\Lambda_{3}(M^{\ast})$ via $\alpha
\rightarrow-\alpha^{\ast}$. \ Since $\rho_{M}(\gamma)^{\ast}=D_{\gamma^{\ast}%
}\mid_{\Lambda_{3}(M^{\ast})}$ for $\gamma\in gl(M)$, we see that $\rho
_{M}(gl(M))$ also acts as derivations of $\delta(x,\xi)(y)$. \ Clearly,
$Id_{\Lambda_{3}(M)}$ acts as derivations of the triple product, so
$[\alpha,\delta(x,\xi)]=\delta(\alpha x,\xi)+\delta(x,-\alpha^{\ast}\xi)$,
showing case 1.

Case 2: $J(\alpha,x,y)$. \ As above, $\widetilde{sl}(M)$ acts as derivations
of $\mu\cdot u=1$ and $(xy)\cdot\mu$. \ Thus,
\[
0=(d_{-\alpha^{\ast}}\mu)\cdot u+\mu\cdot(d_{\alpha}u)=(d_{-\alpha^{\ast}}%
\mu)\cdot u,
\]
so $d_{-\alpha^{\ast}}\mu=0$, and%
\[
\alpha((xy)\cdot\mu)=((\alpha x)y)\cdot\mu+(x(\alpha y))\cdot\mu
+(xy)\cdot(d_{-\alpha^{\ast}}\mu),
\]
so $[\alpha\lbrack x,y]]=[\alpha x,y]+[x,\alpha y]\,$.

Case 3 : $J(x,y,\xi)$. \ We see by Lemma \ref{lem: e properties}(vi) that%
\begin{align*}
\lbrack\lbrack x,y],\xi]  & =(((xy)\cdot\mu)\xi)\cdot u=-(\xi((xy)\cdot
\mu))\cdot u\\
& =-\xi\cdot(((xy)\cdot\mu)\cdot u)=-\xi\cdot(xy)\\
& =-(\xi\cdot x)y+D_{e(x,\xi)}y-D_{e(y,\xi)}x+(\xi\cdot y)x\\
& =\delta(x,\xi)(y)-\delta(y,\xi)(x)\\
& =[[x,\xi],y]-[[y,\xi],x].
\end{align*}

Case 4: $J(x,y,z)$. \ We have%

\begin{align*}
\lbrack\lbrack x,y],z]  & =-\delta(z,(xy)\cdot\mu)=-\rho_{M}(e(z,(xy)\cdot
\mu))+z\cdot((xy)\cdot\mu)Id_{\Lambda_{3}(M)}\\
& =-\rho_{M}(e(z,(xy)\cdot\mu))+((xyz)\cdot\mu)Id_{\Lambda_{3}(M)}.
\end{align*}
Also, by Lemma \ref{lem: e properties}(v) and Lemma \ref{lem: rank n}(i),
\[
\sum\limits_{x,y,z\circlearrowleft}e(x,(yz)\cdot\mu)=e(xyz,\mu)=((xyz)\cdot
\mu)Id_{M}.
\]
Thus,%
\[
\sum\limits_{x,y,z\circlearrowleft}[[x,y],z]=-((xyz)\cdot\mu)\rho_{M}%
(Id_{M})+3((xyz)\cdot\mu)Id_{\Lambda_{3}(M)}=0.
\]

\end{proof}

\bigskip

Suppose $\omega:M\rightarrow N$ is a $\sigma$-semilinear homomorphism where
$\sigma$ is an automorphism of $k$. \ Extending the definition for linear
maps, we define the $\sigma^{-1}$-semilinear map $\omega^{\ast}:N^{\ast
}\rightarrow M^{\ast}$ with $\omega^{\ast}(\phi)=\sigma^{-1}\phi\omega$. \ Let
$\theta_{\omega}$ be the unique extension of $\omega$ to a $\sigma$-semilinear
homomorphism $\Lambda(M)\rightarrow\Lambda(N)$. \ Note $\theta_{\omega
}(a)=\sigma(a)$ for $a\in k$.

\begin{lemma}
Let $M,u$ be as in Theorem \ref{thm: split}. \ The map
\begin{equation}
\alpha\oplus x\oplus\xi\rightarrow-\alpha^{\ast}\oplus\xi\oplus
x\label{eq: Lie isom M M*}%
\end{equation}
for $\alpha\in\widetilde{sl}(M)$, $x\in\Lambda_{3}(M)$, $\xi\in\Lambda
_{3}(M^{\ast})$ is an isomorphism $\mathcal{G}(M,u)\rightarrow\mathcal{G}%
(M^{\ast},\mu)$. \ If $\omega:M\rightarrow N$ is a $\sigma$-semilinear
isomorphism, then
\begin{equation}
\alpha\oplus x\oplus\xi\rightarrow\theta_{\omega}\alpha\theta_{\omega}%
^{-1}\oplus\theta_{\omega}x\oplus\theta_{\omega^{\ast-1}}\xi
\label{eq: semilinear Lie isom}%
\end{equation}
for $\alpha\in\widetilde{sl}(M)$, $x\in\Lambda_{3}(M)$, $\xi\in\Lambda
_{3}(M^{\ast})$ is a $\sigma$-semilinear isomorphism $\mathcal{G}%
(M,u)\rightarrow\mathcal{G}(N,\theta_{\omega}u,)$.
\end{lemma}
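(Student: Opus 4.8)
The plan is to treat both assertions in the same spirit: each proposed map is visibly a $k$-linear (resp.\ $\sigma$-semilinear) bijection on the three summands, so it will suffice to show that it carries the Lie part into the Lie part of the target and that it respects each of the six bracket types of Theorem~\ref{thm: split}. Throughout I use the reflexive identification $M^{\ast\ast}=M$, and I keep track of the fact that in $\mathcal{G}(M^{\ast},\mu)$ the summand $\Lambda_{3}(M)=\Lambda_{3}(M^{\ast\ast})$ now occupies the $\Lambda_{3}(N^{\ast})$-slot while $\Lambda_{3}(M^{\ast})$ occupies the $\Lambda_{3}(N)$-slot; thus (\ref{eq: Lie isom M M*}) interchanges the two exterior-cube summands and applies $\beta\mapsto-\beta^{\ast}$ on the Lie part.

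For the first map I would begin by verifying that $\beta\mapsto-\beta^{\ast}$ sends $\widetilde{sl}(M)$ onto $\widetilde{sl}(M^{\ast})$. Writing $\beta=\rho_{M}(\gamma)+c\,Id$, equation (\ref{eq: rho star}) gives $\beta^{\ast}=\rho_{M^{\ast}}(\gamma^{\ast})+c\,Id$, and since $tr(\gamma^{\ast})=tr(\gamma)$ one gets $T_{M^{\ast}}(-\beta^{\ast})=-T_{M}(\beta)$, so the two kernels correspond. I would then note that $\mathcal{G}(M^{\ast},\mu)$ uses the top form $\mu\in\Lambda_{9}(M^{\ast})$ with dual form $u\in\Lambda_{9}(M)$, because Lemma~\ref{lem: e properties}(i) gives $u\cdot\mu=\mu\cdot u=1$. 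With these identifications the $\widetilde{sl}$--$\widetilde{sl}$, $\widetilde{sl}$--$\Lambda_{3}$, and $\Lambda_{3}$--$\Lambda_{3}$ cases reduce to the elementary facts $[-\alpha^{\ast},-\beta^{\ast}]=-[\alpha,\beta]^{\ast}$, $\alpha^{\ast\ast}=\alpha$, and the $u\leftrightarrow\mu$ symmetry. The delicate case is the mixed bracket $[x,\xi]=\delta(x,\xi)$: using Lemma~\ref{lem: e properties}(i),(ii) and (\ref{eq: rho star}) I would compute $\delta(x,\xi)^{\ast}=\rho_{M^{\ast}}(e(\xi,x))-(x\cdot\xi)Id$, so its image $-\delta(x,\xi)^{\ast}$ equals $-\rho_{M^{\ast}}(e(\xi,x))+(x\cdot\xi)Id$, which matches the target bracket $[\,x,\xi\,]=-\delta_{M^{\ast}}(\xi,x)$ (the minus sign coming from the skew symmetry of the slot interchange).

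For the semilinear map the essential preliminary is a compatibility between the semilinear extensions and the $\cdot$-action, namely
\[
\theta_{\omega^{\ast-1}}(\xi)\cdot\theta_{\omega}(x)=\theta_{\omega}(\xi\cdot x),\qquad \theta_{\omega}(x)\cdot\theta_{\omega^{\ast-1}}(\xi)=\theta_{\omega^{\ast-1}}(x\cdot\xi).
\]
I would prove the first by showing $\Delta_{\omega^{\ast-1}\phi}\circ\theta_{\omega}=\theta_{\omega}\circ\Delta_{\phi}$ for $\phi\in M^{\ast}$: both sides satisfy the same $\theta_{\omega}$-twisted anti-derivation rule and agree in degrees $0$ and $1$ (where each sends $m\mapsto\sigma(\phi(m))$), hence agree everywhere; the homomorphism property of $\Delta$ and of $\theta$ then upgrades this from $\phi$ to all of $\Lambda(M^{\ast})$. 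Three consequences are then used repeatedly: first, $\theta_{\omega^{\ast-1}}(\mu)\cdot\theta_{\omega}(u)=\sigma(\mu\cdot u)=1$, so $\mathcal{G}(N,\theta_{\omega}u)$ is defined with dual form $\theta_{\omega^{\ast-1}}\mu$; second, $\theta_{\omega}\rho_{M}(\gamma)\theta_{\omega}^{-1}=\rho_{N}(\omega\gamma\omega^{-1})$ with $tr(\omega\gamma\omega^{-1})=\sigma(tr(\gamma))$, whence $T_{N}\circ\mathrm{conj}=\sigma\circ T_{M}$ and $\widetilde{sl}(M)$ maps into $\widetilde{sl}(N)$; third, $\omega\,e(x,\xi)\,\omega^{-1}=e_{N}(\theta_{\omega}x,\theta_{\omega^{\ast-1}}\xi)$, checked by evaluating both sides at $\omega m$ and applying the compatibility twice.

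With these in hand each bracket is routine: the $\widetilde{sl}$ cases are formal conjugation identities; the $\Lambda_{3}$--$\Lambda_{3}$ cases use that $\theta_{\omega}$ is an algebra homomorphism together with the displayed compatibility; and the mixed case $[x,\xi]=\delta(x,\xi)$ splits into the $e$-term (handled by the third consequence and $\theta_{\omega}\rho_{M}\theta_{\omega}^{-1}=\rho_{N}$) and the scalar term $(x\cdot\xi)Id$, which transforms by $\sigma$. Bijectivity is immediate since $\omega$, hence $\theta_{\omega}$ and $\theta_{\omega^{\ast-1}}$, are isomorphisms. I expect the main obstacle to be precisely this semilinear $\cdot$-compatibility together with the careful bookkeeping of $\sigma$ through the trace and through the scalars $x\cdot\xi$; once the anti-derivation identity $\Delta_{\omega^{\ast-1}\phi}\theta_{\omega}=\theta_{\omega}\Delta_{\phi}$ is secured, the rest parallels the proof of Theorem~\ref{thm: split}.
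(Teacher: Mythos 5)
Your proposal is correct and takes essentially the same approach as the paper: the first map rests on $\delta(x,\xi)^{\ast}=\delta(\xi,x)$ via (\ref{eq: rho star}) and Lemma \ref{lem: e properties}(i),(ii), and the semilinear map rests on exactly the paper's key identity $\theta_{\omega}\Delta_{\xi}\theta_{\omega}^{-1}=\Delta_{\theta_{\omega^{\ast-1}}(\xi)}$, proved the same way (reduce to $\xi\in M^{\ast}$ by multiplicativity, then compare antiderivations on the generators $\theta_{\omega}(M)=N$). You merely unpack the paper's general criterion (\ref{eq: isom 1})--(\ref{eq: isom 4}) into direct bracket-by-bracket verifications, additionally making explicit two well-definedness points the paper leaves implicit (that conjugation carries $\widetilde{sl}(M)$ into $\widetilde{sl}(N)$ and that $\theta_{\omega^{\ast-1}}(\mu)\cdot\theta_{\omega}(u)=1$).
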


\begin{proof}
Using (\ref{eq: rho star}) and Lemma \ref{lem: e properties}, we see
$\delta(x,\xi)^{\ast}=\delta(\xi,x)$. \ It is then clear that
(\ref{eq: Lie isom M M*}) is an isomorphism. \ 

The Lie product on $\mathcal{G}(M,u)$ is completely determined by the graded
products on $\Lambda(M)$ and $\Lambda(M^{\ast})$, the actions of
$\Lambda(M^{\ast})$ on $\Lambda(M)$ and $\Lambda(M)$ on $\Lambda(M^{\ast})$,
the actions $\beta\rightarrow\rho_{M}(\beta)=D_{\beta}\mid_{\Lambda_{3}(M)}$
and $\beta\rightarrow$ $-\rho_{M}(\beta)^{\ast}$of $gl(M)$ on $\Lambda_{3}(M)$
and $\Lambda_{3}(M^{\ast})$, and the elements $u\in\Lambda_{9}(M)$, $\mu
\in\Lambda(M^{\ast})$. \ Thus, if $\eta:\Lambda(M)\rightarrow\Lambda(N)$ and
$\eta^{\prime}:\Lambda(M^{\ast})\rightarrow\Lambda(N^{\ast})$ are graded ring
isomorphisms and $\breve{\eta}:gl(M)\rightarrow gl(N)$ is a Lie ring
isomorphism with
\begin{align}
\eta(\xi\cdot x)  & =\eta^{\prime}(\xi)\cdot\eta(x),\label{eq: isom 1}\\
\eta^{\prime}(x\cdot\xi)  & =\eta(x)\cdot\eta^{\prime}(\xi),\label{eq: isom 2}%
\\
\rho_{N}(\breve{\eta}(\beta))  & =\eta\rho_{M}(\beta)\eta^{-1}%
,\label{eq: isom 3}\\
\rho_{N}(\breve{\eta}(\beta))^{\ast}  & =\eta^{\prime}\rho_{M}(\beta)^{\ast
}\eta^{\prime-1},\label{eq: isom 4}%
\end{align}
for $x\in\Lambda_{3}(M)$, $\xi\in\Lambda_{3}(M^{\ast})$, and $\beta\in gl(M)$,
then
\[
\alpha\oplus x\oplus\xi\rightarrow\eta\alpha\eta^{-1}\oplus\eta x\oplus
\eta^{\prime}\xi
\]
is a Lie ring isomorphism $\mathcal{G}(M,u)\rightarrow\mathcal{G}(N,\eta u)$.
\ Now let $\eta=\theta_{\omega}$, $\eta^{\prime}=\theta_{\omega^{\ast-1}}$,
and $\breve{\eta}(\beta)=\omega\beta\omega^{-1}$. \ We can rewrite
(\ref{eq: isom 1}) as
\begin{equation}
\theta_{\omega}\Delta_{\xi}\theta_{\omega}^{-1}=\Delta_{\theta_{\omega
^{\ast-1}}(\xi)}.\label{eq: isom 1'}%
\end{equation}
Since both sides of (\ref{eq: isom 1'}) are multiplicative in $\xi$, we can
assume $\xi\in M^{\ast}$. \ In that case, both sides are antiderivations of
$\Lambda(N)$, so it suffices to apply both sides to $\theta_{\omega}(M)=N$.
\ We have
\begin{align*}
\Delta_{\theta_{\omega^{\ast-1}}(\xi)}\theta_{\omega}(m)  & =\omega^{\ast
-1}(\xi)(\omega(m))=(\sigma\xi\omega^{-1})(\omega(m))\\
& =\sigma\xi(m)=\theta_{\omega}(\xi(m))=\theta_{\omega}\Delta_{\xi}(m),
\end{align*}
and (\ref{eq: isom 1}) follows. \ Reversing the roles of $M$ and $M^{\ast}$
gives (\ref{eq: isom 2}). \ If $\beta\in gl(M)$, then $\theta_{\omega}%
D_{\beta}\theta_{\omega}^{-1}=D_{\omega\beta\omega^{-1}}$ since they are
derivations agreeing on $\theta_{\omega}(M)=N$. \ \ This shows
(\ref{eq: isom 3}). \ Finally,
\begin{align*}
\rho_{N}(\omega\beta\omega^{-1})^{\ast}  & =\rho_{N^{\ast}}((\omega\beta
\omega^{-1})^{\ast})=\rho_{N^{\ast}}(\omega^{\ast-1}\beta^{\ast}\omega^{\ast
})\\
& =\theta_{\omega^{\ast-1}}\rho_{M}(\beta)^{\ast}\theta_{\omega^{\ast}},
\end{align*}
showing (\ref{eq: isom 4}). \ Thus, the $\sigma$-semilinear map
(\ref{eq: semilinear Lie isom}) is a Lie isomorphism.
\end{proof}

\bigskip

Let $K$ be a unital commutative ring with involution $a\rightarrow\bar{a}$ and
let $k$ be the subring of fixed elements. \ Let $M$ be a finite generated
projective $K$-module.of rank $9$ with a nonsingular hermitian form $h$; i.e.,
$\eta:m\rightarrow h(m,)$ is a semilinear isomorphism $M\rightarrow M^{\ast}$.
\ Define the semilinear involution $\tau$ on $gl(M) $ by $h(m,\alpha
n)=h(\tau(\alpha)m,n)$; i.e., $\tau(\alpha)=\eta^{-1}\alpha^{\ast}\eta$.
\ Let
\begin{align*}
u(M,h)  & =\{\alpha\in gl(M):\tau(\alpha)=-\alpha\},\\
su(M,h)  & =u(M,h)\cap sl(M),\\
sk(K)  & =\{a\in K:\bar{a}=-a\},\\
\tilde{u}(M,h)  & =\rho_{M}(u(M,h))+sk(K)Id_{\Lambda_{3}(M)}.
\end{align*}
Clearly, $\tilde{u}(M,h)$ is a subalgebra of $\widetilde{gl}(M)$. \ Note,
$sk(K)Id_{M}\subset u(M,h)$, so $\tilde{u}(M,h)=\rho_{M}(u(M,h))$ if $\frac
{1}{3}\in K$. \ Finally, set
\[
\widetilde{su}(M,h)=\tilde{u}(M,h)\cap\widetilde{sl}(M).\
\]
We also set $x\cdot y=\theta_{\eta}(x)\cdot y$ for $x,y\in\Lambda(M)$ and
$\delta(x,y)=\delta(x,\theta_{\eta}(y))$ for $x,y\in\Lambda_{3}(M)$.

\begin{theorem}
\label{thm: twisted Lie algebra}Let $K$ be a unital commutative ring with
involution $a\rightarrow\bar{a}$ and let $k$ be the subring of fixed elements.
\ Let $M$ be a finite generated projective $K$-module.of rank $9$ with a
nonsingular hermitian form $h$. \ If $u\in\Lambda_{9}(M)$ with $u\cdot u=1$
and $\mu=\theta_{\eta}(u)$, then%
\[
\zeta(\alpha\oplus x\oplus\xi)=-\theta_{\eta}^{-1}\alpha^{\ast}\theta_{\eta
}\oplus\theta_{\eta}^{-1}(\xi)\oplus\theta_{\eta}(x)
\]
for $\alpha\in\widetilde{sl}(M)$, $x\in\Lambda_{3}(M)$, $\xi\in\Lambda
_{3}(M^{\ast})$ is a semi-linear automorphism of $\mathcal{G}(M,u)$.
\ Moreover, $\alpha\oplus x\oplus\theta_{\eta}(x)\rightarrow\alpha\oplus x$ is
an isomorphism of the Lie algebra $\mathcal{G}(\zeta)$ over $k$ of fixed
points of $\zeta$ to
\[
\mathcal{G}(M,h,u)=\widetilde{su}(M,h)\oplus\Lambda_{3}(M)
\]
with skew-symmetric product given by%
\begin{align*}
\lbrack\alpha,\beta]  & =\alpha\beta-\beta\alpha,\\
\lbrack\alpha,x]  & =\alpha x,\\
\lbrack x,y]  & =(\delta(x,y)-\delta(y,x))\oplus(xy)\cdot u
\end{align*}
for $\alpha,\beta\in\widetilde{su}(M,h)$, $x,y\in\Lambda_{3}(M)$.
\end{theorem}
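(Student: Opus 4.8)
The plan is to obtain $\zeta$ as a composite of the two isomorphisms furnished by the preceding lemma, applied over the base ring $K$, so that the automorphism property is automatic. The key preliminary observation is that, since $h$ is hermitian, the semilinear isomorphism $\eta:M\rightarrow M^{\ast}$ is self-adjoint: for the semilinear dual one has $\eta^{\ast}(m)(n)=\overline{\eta(n)(m)}=\overline{h(n,m)}=h(m,n)=\eta(m)(n)$, so $\eta^{\ast}=\eta$ and hence $\theta_{\eta}^{\ast}=\theta_{\eta^{\ast}}=\theta_{\eta}$ on $\Lambda_{3}$. Applying the semilinear isomorphism (\ref{eq: semilinear Lie isom}) with $\omega=\eta$ gives an isomorphism $\Phi:\mathcal{G}(M,u)\rightarrow\mathcal{G}(M^{\ast},\theta_{\eta}u)=\mathcal{G}(M^{\ast},\mu)$, and (\ref{eq: Lie isom M M*}) applied to the module $M^{\ast}$ gives $\Psi:\mathcal{G}(M^{\ast},\mu)\rightarrow\mathcal{G}(M^{\ast\ast},u)=\mathcal{G}(M,u)$. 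Tracking the three components of $\Psi\circ\Phi$ and using $\theta_{\eta}^{\ast}=\theta_{\eta}$ (together with $\eta^{\ast-1}=\eta^{-1}$) shows $\Psi\circ\Phi=\zeta$, so $\zeta$ is a semilinear Lie automorphism of $\mathcal{G}(M,u)$.

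Next I would verify $\zeta^{2}=Id$; a direct component computation reduces this once more to $\theta_{\eta}^{\ast}=\theta_{\eta}$. As $\zeta$ is a semilinear involution whose action on scalars is the order-two involution of $K$, its fixed set $\mathcal{G}(\zeta)$ is a $k$-submodule closed under the bracket, hence a $k$-Lie algebra. An element $\alpha\oplus x\oplus\xi$ is fixed precisely when $\xi=\theta_{\eta}(x)$ and $\theta_{\eta}^{-1}\alpha^{\ast}\theta_{\eta}=-\alpha$. Writing $\tilde{\tau}(\alpha)=\theta_{\eta}^{-1}\alpha^{\ast}\theta_{\eta}$ and combining (\ref{eq: rho star}) with the conjugation identity $\theta_{\eta}^{-1}D_{\gamma^{\ast}}\theta_{\eta}=D_{\eta^{-1}\gamma^{\ast}\eta}$ from the preceding lemma, I obtain $\tilde{\tau}\circ\rho_{M}=\rho_{M}\circ\tau$ and $\tilde{\tau}(cId)=\bar{c}\,Id$; these identify the $(-1)$-eigenspace of $\tilde{\tau}$ in $\widetilde{gl}(M)$ with $\tilde{u}(M,h)$, so the fixed part in $\widetilde{sl}(M)$ is exactly $\widetilde{su}(M,h)$. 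Thus $\mathcal{G}(\zeta)=\{\alpha\oplus x\oplus\theta_{\eta}(x):\alpha\in\widetilde{su}(M,h),\ x\in\Lambda_{3}(M)\}$, and the projection $\alpha\oplus x\oplus\theta_{\eta}(x)\mapsto\alpha\oplus x$ is a $k$-linear bijection onto $\widetilde{su}(M,h)\oplus\Lambda_{3}(M)$.

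It then remains to transport the bracket along this bijection. Expanding $[\alpha\oplus x\oplus\theta_{\eta}x,\ \beta\oplus y\oplus\theta_{\eta}y]$ bilinearly by means of the products in Theorem \ref{thm: split}, the $\widetilde{sl}(M)$-component collects to $[\alpha,\beta]+\delta(x,\theta_{\eta}y)-\delta(y,\theta_{\eta}x)$ and the $\Lambda_{3}(M)$-component to $\alpha y-\beta x+((\theta_{\eta}x)(\theta_{\eta}y))\cdot u$. Rewriting these in the twisted notation $\delta(x,y)=\delta(x,\theta_{\eta}y)$ and $(xy)\cdot u=\theta_{\eta}(xy)\cdot u$ (the latter valid since $\theta_{\eta}$ is a graded algebra homomorphism) yields precisely the product claimed for $\mathcal{G}(M,h,u)$. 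Because $\mathcal{G}(\zeta)$ is $\zeta$-stable, the $\Lambda_{3}(M^{\ast})$-component is automatically $\theta_{\eta}$ of the $\Lambda_{3}(M)$-component and the $\widetilde{sl}$-part automatically lies in $\widetilde{su}(M,h)$, so no separate closure check is needed.

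The main obstacle is the hermitian bookkeeping: the three decisive facts — that $\Psi\circ\Phi=\zeta$, that $\zeta^{2}=Id$, and that the fixed part in $\widetilde{sl}(M)$ is $\widetilde{su}(M,h)$ — all hinge on the single identity $\eta^{\ast}=\eta$ (equivalently $\theta_{\eta}^{\ast}=\theta_{\eta}$) coming from $h$ being hermitian, and require care in moving adjoints and the involution of $K$ through $\theta_{\eta}$. A secondary delicate point is the identification of the $(-1)$-eigenspace of $\tilde{\tau}$ with $\tilde{u}(M,h)$, where one must account for the non-uniqueness of a decomposition $\alpha=\rho_{M}(\gamma)+cId$ and check that $\tilde{\tau}$ respects both summands compatibly.
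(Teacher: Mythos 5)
Your proposal is correct and takes essentially the same route as the paper's proof: factor $\zeta$ as the composite of the semilinear isomorphism (\ref{eq: semilinear Lie isom}) with $\omega=\eta$ and the $M^{\ast}$-instance (equivalently, the inverse) of (\ref{eq: Lie isom M M*}), all resting on $\eta^{\ast}=\eta$; identify $\mathcal{G}(\zeta)$ via the conjugation formulas $\theta_{\eta}^{-1}\rho_{M}(\gamma)^{\ast}\theta_{\eta}=\rho_{M}(\tau(\gamma))$ and $\theta_{\eta}^{-1}(cId)^{\ast}\theta_{\eta}=\bar{c}\,Id$; then transport the bracket componentwise, noting the $\Lambda_{3}(M^{\ast})$-component is determined by $\zeta$-stability. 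Your extra verification that $\zeta^{2}=Id$ is a harmless addition the paper leaves implicit.
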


\begin{proof}
Since $h$ is hermitian, it is easy to see that $\eta^{\ast}=\eta$ and
$(\theta_{\eta}\alpha\theta_{\eta}^{-1})^{\ast}=\theta_{\eta}^{-1}\alpha
^{\ast}\theta_{\eta}$. \ Thus, $\zeta$ is the product of the semilinear
isomorphism $\mathcal{G}(M,u)\rightarrow\mathcal{G}(M^{\ast},\mu)$ given by
(\ref{eq: semilinear Lie isom}) with $N=M^{\ast}$ and $\omega=\eta$ and the
inverse of the isomorphism (\ref{eq: Lie isom M M*}). Since
\begin{align*}
\theta_{\eta}^{-1}\rho_{M}(\alpha)^{\ast}\theta_{\eta}  & =\rho_{M}(\eta
^{-1}\alpha^{\ast}\eta)=\rho_{M}(\tau(\alpha)),\\
\theta_{\eta}^{-1}(aId_{\Lambda_{3}(M)}^{\ast})\theta_{\eta}  & =\bar
{a}Id_{\Lambda_{3}(M)},
\end{align*}
we see that the Lie algebra $\mathcal{G}(\zeta)$ of fixed points of $\zeta$ is%
\[
\mathcal{G}(\zeta)=\{\alpha\oplus x\oplus\theta_{\eta}(x):\alpha\in
\widetilde{su}(M,h),x\in\Lambda_{3}(M)\}.
\]
The $\widetilde{su}(M,h)$ component of $[x\oplus\theta_{\eta}(x),y\oplus
\theta_{\eta}(y)]$ is%
\[
\lbrack x,\theta_{\eta}(y)]-[y,\theta_{\eta}(x)]=\delta(x,y)-\delta(y,x),
\]
while the $\Lambda_{3}(M)$ component is%
\begin{align*}
\lbrack\theta_{\eta}(x),\theta_{\eta}(y)]  & =(\theta_{\eta}(x)\theta_{\eta
}(y))\cdot u=\theta_{\eta}(xy)\cdot u\\
& =(xy)\cdot u.
\end{align*}
Thus, $\alpha\oplus x\oplus\theta_{\eta}(x)\rightarrow\alpha\oplus x$ is an
isomorphism of $\mathcal{G}(\zeta)$ with $\mathcal{G}(M,h,u)$.
\end{proof}

\bigskip

\section{Forms of exceptional Lie algebras}

\begin{lemma}
\label{lem: isom of extensions}If $F\in k$-alg is faithfully flat, then there
are canonical isomorphisms%
\begin{align*}
\mathcal{G}(M,u)_{F}  & \cong\mathcal{G}(M_{F},u_{F}),\\
\mathcal{G}(M,h,u)_{F}  & \cong\mathcal{G}(M_{F},h_{F},u_{F}),
\end{align*}
where $u_{F}$ is the image of $u\otimes1$ in the canonical isomorphism
$\Lambda_{9}(M)_{F}\rightarrow\Lambda_{9}(M_{F})$ and $h_{F}$ is the extension
of the $k$-bilinear map $h$ given by (\ref{eq: multilinear extension}).
\end{lemma}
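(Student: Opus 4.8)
The plan is to show that the entire construction of $\mathcal{G}(M,u)$ and $\mathcal{G}(M,h,u)$ commutes with the base change $-\otimes_k F$, so that the canonical $F$-module isomorphism of the underlying spaces automatically intertwines the two Lie brackets. The guiding principle is that every bracket in Theorem \ref{thm: split} and Theorem \ref{thm: twisted Lie algebra} is a $k$-multilinear map assembled from the structure maps of \S2 (the graded products, the dot actions, $\rho_M$, the maps $e(\cdot,\cdot)$, the trace, $\delta$) together with the fixed element $u$, and (\ref{eq: multilinear extension}) says such maps have unique $F$-multilinear extensions.

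First I would collect, for $K=F$, the canonical isomorphisms of \S2: since $M$ is finitely generated projective so are $M^{\ast}$, $\Lambda_i(M)$, $\Lambda_i(M^{\ast})$ and $gl(M)$, and one has $\Lambda_i(M)_F\cong\Lambda_i(M_F)$, $\Lambda_i(M^{\ast})_F\cong\Lambda_i(M_F^{\ast})$, and $gl(M)_F\cong gl(M_F)$. By the uniqueness in (\ref{eq: multilinear extension}) these carry the graded products, the dot actions, $\rho_M$ (hence, via (\ref{eq: rho star}), its adjoint), the maps $e$, the trace, and $\delta$ to their intrinsic $F$-counterparts. In particular $u_F$ and $\mu_F$ satisfy $\mu_F\cdot u_F=(\mu\cdot u)_F=1$, so by Lemma \ref{lem: rank n}(ii) and the remark following it, $\mu_F$ is the element determined by $u_F$ and $\mathcal{G}(M_F,u_F)$ is defined.

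The hard part will be the summand $\widetilde{sl}(M)$, since it is built from an image, a sum, and a kernel rather than appearing as an evident functor of $M$. Here I would use that $F$ is flat over $k$, so $-\otimes_k F$ is exact and hence commutes with kernels, images, sums, and intersections of submodules. Thus $\widetilde{gl}(M)_F=(\rho_M(gl(M))+k\,Id)_F=\rho_{M_F}(gl(M_F))+F\,Id=\widetilde{gl}(M_F)$, and since $T(\beta)=\mu\cdot d_\beta(u)$ is itself assembled from base-change-compatible data, $T_F$ coincides with the intrinsic functional over $F$; flatness then gives $\widetilde{sl}(M)_F=\ker(T_F)=\widetilde{sl}(M_F)$. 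Combining this with the exterior pieces, the decomposition $\mathcal{G}(M,u)=\widetilde{sl}(M)\oplus\Lambda_3(M)\oplus\Lambda_3(M^{\ast})$ base-changes to that of $\mathcal{G}(M_F,u_F)$, and multilinearity of the brackets forces the isomorphism to preserve them.

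For the Hermitian statement I would run the same argument over $K$, base-changing along $F\in k$-alg with $K_F=K\otimes_k F$ carrying the induced involution. A useful preliminary observation is that, since $k=\ker(a\mapsto a-\bar a)$ and $F$ is flat, the fixed ring of $K_F$ is exactly $F$, so $\mathcal{G}(M_F,h_F,u_F)$ really is an $F$-algebra. The form $h$ extends to $h_F$ by (\ref{eq: multilinear extension}), the semilinear map $\eta$ and its extension $\theta_\eta$ base-change canonically, so the twisted products $\theta_\eta(x)\cdot y$ and $\delta(x,\theta_\eta(y))$ extend correctly; and flatness again handles the submodules, $u(M,h)=\ker(\tau+Id)$ and $\widetilde{su}(M,h)=\tilde u(M,h)\cap\widetilde{sl}(M)$ base-changing to $u(M_F,h_F)$ and $\widetilde{su}(M_F,h_F)$. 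The one point demanding care throughout is matching the intrinsic trace functional and involution over $F$ with the base changes of $T$ and $\tau$, so that the relevant kernels genuinely correspond; once that bookkeeping is done, multilinearity of the brackets completes both isomorphisms. Only flatness of $F$ is used here, faithful flatness being the ambient hypothesis needed afterwards to recognize the algebras as forms.
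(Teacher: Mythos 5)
Your proposal is correct and takes essentially the same route as the paper: canonical base-change isomorphisms for the finitely generated projective pieces $\Lambda_{3}(M)$, $\Lambda_{3}(M^{\ast})$, $gl(M)$ (hence $\widetilde{gl}(M)_{F}\cong\widetilde{gl}(M_{F})$), then flatness of $F$ to identify the kernel-defined subalgebras, $\widetilde{sl}(M)_{F}=\ker(T_{F})\cong\widetilde{sl}(M_{F})$ and $\widetilde{su}(M,h)_{F}\cong\widetilde{su}(M_{F},h_{F})$, with multilinearity transporting the brackets. The only cosmetic differences are that the paper realizes $\widetilde{su}(M,h)$ as the kernel of the single map $\alpha\mapsto(\alpha+\tau(\alpha))\oplus T(\alpha)$ where you use a kernel--image--sum--intersection decomposition, and your (correct) observations that only flatness is actually used and that the fixed ring of $K_{F}$ is $F$ are left implicit in the paper.
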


\begin{proof}
Since $M$ is finitely generated projective, we have seen that there are
canonical isomorphisms%
\begin{align}
\Lambda_{3}(M)_{K}  & \cong\Lambda_{3}(M_{K}),\label{eq: can isom 1}\\
\Lambda_{3}(M^{\ast})_{K}  & \cong\Lambda_{3}(M_{K}^{\ast}%
),\label{eq: can isom 2}\\
gl(M)_{K}  & \cong gl(M_{K}),\label{eq: can isom 3}%
\end{align}
for $K\in k$-alg. \ Moreover, $(\rho(gl(M)))_{K}\cong\rho_{K}(gl(M_{K}))$ for
$\rho:\alpha\rightarrow D_{\alpha}\mid_{\Lambda_{3}(M)}$, so
\[
\widetilde{gl}(M)_{K}\cong\widetilde{gl}(M_{K}).
\]
\ If $F\in k$-alg is faithfully flat, then the exact sequence%
\[
\widetilde{sl}(M)\rightarrow\widetilde{gl}(M)\overset{T}{\rightarrow}k
\]
implies that%
\[
\widetilde{sl}(M)_{F}\rightarrow\widetilde{gl}(M)_{F}\overset{T_{F}%
}{\rightarrow}F
\]
is exact. \ Thus, $\widetilde{sl}(M)_{F}=\ker(T_{F})\cong\widetilde{sl}%
(M_{F})$ . \ Similarly, $\widetilde{su}(M,h)$ is the kernel of the map
$\alpha\rightarrow(\alpha+\tau(\alpha))\oplus T(\alpha)$, so $\widetilde
{su}(M,h)_{F}\cong\widetilde{su}(M_{F},h_{F})$. \ The canonical isomorphisms
of the lemma are now obvious.
\end{proof}

\bigskip

Suppose $K=k_{+}\oplus k_{-}$ where $k_{\sigma}$ is an isomorphic copy of $k $
via $a\rightarrow a_{\sigma}$ and $\bar{a}_{\sigma}=a_{-\sigma}$ for
$\sigma=\pm$. \ We shall identify $a\in k$ with $a_{+}\oplus a_{-}\in K$, and
write $M_{\sigma}=1_{\sigma}M$ and $m_{\sigma}=1_{\sigma}m$ where $M$ is a
$K$-module and $m\in M$. \ 

\begin{lemma}
\label{lem: split K}If $M,h,u$ and $\zeta$ are as in Theorem
\ref{thm: twisted Lie algebra} for $K=k_{+}\oplus k_{-}$, then%
\[
\alpha\oplus x\rightarrow\alpha_{+}\oplus x_{+}\oplus\theta_{\eta}(x_{-})
\]
is an isomorphism of $\mathcal{G}(M,h,u)$ with $\mathcal{G}(M_{+},u_{+})$.
\end{lemma}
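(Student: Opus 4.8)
The plan is to realize the stated map as the composite of the isomorphism $\mathcal{G}(M,h,u)\cong\mathcal{G}(\zeta)$ from Theorem \ref{thm: twisted Lie algebra} with the projection of $\mathcal{G}(M,u)$ onto a direct factor, after observing that over the split ring $K=k_{+}\oplus k_{-}$ every object in sight decomposes along the two idempotents. First I would record those decompositions, forced by $1=1_{+}+1_{-}$ with $1_{+}1_{-}=0$. Writing $M_{\sigma}=1_{\sigma}M$, each $M_{\sigma}$ is a rank-$9$ finitely generated projective $k_{\sigma}$-module, $gl(M)=gl(M_{+})\oplus gl(M_{-})$ (no off-diagonal $K$-homomorphisms exist between modules over orthogonal idempotents), and $M^{\ast}=M_{+}^{\ast}\oplus M_{-}^{\ast}$ with $(M^{\ast})_{\sigma}=M_{\sigma}^{\ast}$. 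Since a wedge of a $1_{+}$-element with a $1_{-}$-element vanishes, $\Lambda_{n}(M)=\Lambda_{n}(M_{+})\oplus\Lambda_{n}(M_{-})$ for $n\geq1$, and likewise for $M^{\ast}$. Because $\eta:m\mapsto h(m,\,)$ is $\bar{\ }$-semilinear and $\overline{1_{\sigma}}=1_{-\sigma}$, it carries $M_{\sigma}$ into $(M^{\ast})_{-\sigma}=M_{-\sigma}^{\ast}$; hence $\theta_{\eta}$ maps $\Lambda_{n}(M_{\sigma})$ into $\Lambda_{n}(M_{-\sigma}^{\ast})$ and $\theta_{\eta}^{-1}$ maps $\Lambda_{n}(M_{\sigma}^{\ast})$ into $\Lambda_{n}(M_{-\sigma})$.

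Next I would show that the untwisted algebra $\mathcal{G}(M,u)$ of Theorem \ref{thm: split} (over $K$) splits as a direct sum of ideals $\mathcal{G}(M_{+},u_{+})\oplus\mathcal{G}(M_{-},u_{-})$, each being the split construction over $k_{\sigma}$. All the defining operations ($\rho_{M}$, $e(x,\xi)$, $\delta$, the graded products, and the dot actions) respect the $1_{\sigma}$-grading, and every cross term vanishes: one has $\widetilde{gl}(M)=\widetilde{gl}(M_{+})\oplus\widetilde{gl}(M_{-})$ and $T=T_{+}\oplus T_{-}$, so $\widetilde{sl}(M)=\widetilde{sl}(M_{+})\oplus\widetilde{sl}(M_{-})$; an operator coming from $\widetilde{sl}(M_{+})$ annihilates $\Lambda_{3}(M_{-})$; and $x_{+}y_{-}=0$ in $\Lambda_{6}(M)$. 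Writing $u=u_{+}+u_{-}$ and $\mu=\theta_{\eta}(u)=\mu_{+}+\mu_{-}$ with $\mu_{\sigma}=\theta_{\eta}(u_{-\sigma})\in\Lambda_{9}(M_{\sigma}^{\ast})$, the hypothesis $u\cdot u=\mu\cdot u=1$ collapses, once the cross pairings die, to $\mu_{\sigma}\cdot u_{\sigma}=1_{\sigma}$, so Theorem \ref{thm: split} genuinely applies to each $(M_{\sigma},u_{\sigma})$. In particular the projection $\pi_{+}:\mathcal{G}(M,u)\rightarrow\mathcal{G}(M_{+},u_{+})$ onto the first factor is a Lie algebra homomorphism.

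Finally I would compute the composite. By Theorem \ref{thm: twisted Lie algebra}, $\alpha\oplus x\mapsto\alpha\oplus x\oplus\theta_{\eta}(x)$ is an isomorphism of $\mathcal{G}(M,h,u)$ onto the fixed-point subalgebra $\mathcal{G}(\zeta)\subset\mathcal{G}(M,u)$. Splitting $\alpha=\alpha_{+}+\alpha_{-}$, $x=x_{+}+x_{-}$ and using $\theta_{\eta}(x_{+})\in\Lambda_{3}(M_{-}^{\ast})$, $\theta_{\eta}(x_{-})\in\Lambda_{3}(M_{+}^{\ast})$, the $1_{+}$-component of $\alpha\oplus x\oplus\theta_{\eta}(x)$ is exactly $\alpha_{+}\oplus x_{+}\oplus\theta_{\eta}(x_{-})$, so $\pi_{+}$ restricted to $\mathcal{G}(\zeta)$ is precisely the stated map. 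Being the restriction of a homomorphism to a subalgebra, it is a homomorphism, and it remains only to see it is bijective. This reduces to two $k$-linear isomorphisms: $\widetilde{su}(M,h)\rightarrow\widetilde{sl}(M_{+})$, $\alpha\mapsto\alpha_{+}$ (the skew condition $\tau(\alpha)=-\alpha$ forces $\alpha_{-}$ from $\alpha_{+}$, while $\alpha_{+}$ is free), and $\Lambda_{3}(M)\rightarrow\Lambda_{3}(M_{+})\oplus\Lambda_{3}(M_{+}^{\ast})$, $x\mapsto x_{+}\oplus\theta_{\eta}(x_{-})$ (an isomorphism because $\theta_{\eta}^{-1}$ restricts to a bijection $\Lambda_{3}(M_{+}^{\ast})\rightarrow\Lambda_{3}(M_{-})$). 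Composing with the isomorphism of Theorem \ref{thm: twisted Lie algebra} then yields $\mathcal{G}(M,h,u)\cong\mathcal{G}(M_{+},u_{+})$.

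I expect the main obstacle to be the bookkeeping of the second paragraph, namely verifying that each structure map of $\mathcal{G}(M,u)$ preserves the idempotent grading and kills the cross terms so that the splitting into ideals is genuine, together with the claim that $\alpha\mapsto\alpha_{+}$ is a bijection $\widetilde{su}(M,h)\rightarrow\widetilde{sl}(M_{+})$; for the latter one must track how the semilinear involution $\tau$ and the trace $T$ interact with the decomposition, using that $\tau$ interchanges the two blocks $gl(M_{+})$ and $gl(M_{-})$.
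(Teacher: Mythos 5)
Your proposal is correct and follows essentially the same route as the paper: identify $\mathcal{G}(M,h,u)$ with $\mathcal{G}(\zeta)$ via Theorem \ref{thm: twisted Lie algebra}, decompose $\mathcal{G}(M,u)$ along the idempotents $1_{\pm}$ into the two ideals $\mathcal{G}(M_{+},u_{+})\oplus\mathcal{G}(M_{-},u_{-})$, and project onto the $+$ component. The only (stylistic) difference is that the paper gets bijectivity for free from the observation that the semilinear $\zeta$ interchanges the two factors, so $\mathcal{G}(\zeta)=\{z+\zeta(z):z\in\mathcal{G}(M,u)_{+}\}$, whereas you verify bijectivity by hand on each graded piece; your componentwise checks fill in what the paper dismisses with ``clearly.''
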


\begin{proof}
Clearly, $\mathcal{G}(M,u)=\mathcal{G}(M,u)_{+}\oplus\mathcal{G}(M,u)_{-}$ as
Lie algebras over $K$. \ Moreover, since $\zeta$ is semilinear, $\zeta$
interchanges $\mathcal{G}(M,u)_{+}$ with $\mathcal{G}(M,u)_{-}$, so
\[
\mathcal{G}(\zeta)=\{z+\zeta(z):z\in\mathcal{G}(M,u)_{+}\}.
\]
Thus, $z\rightarrow z_{+}$ is a Lie algebra isomorphism $\mathcal{G}%
(\zeta)\rightarrow\mathcal{G}(M,u)_{+}$ over $k$. \ Using Theorem
\ref{thm: twisted Lie algebra}, we see that
\[
\alpha\oplus x\rightarrow(\alpha\oplus x\oplus\theta_{\eta}(x))_{+}=\alpha
_{+}\oplus x_{+}\oplus\theta_{\eta}(x_{-})
\]
is a Lie algebra isomorphism $\mathcal{G}(M,h,u)\rightarrow\mathcal{G}%
(M,u)_{+}$ over $k$. \ On the other hand, $M_{k_{+}}=1_{+}\otimes M_{+}$ can
be identified with $M_{+}$ as $k_{+}$-modules. \ Thus,
\[
\mathcal{G}(M,u)_{+}=\mathcal{G}(M,u)_{k_{+}}=\mathcal{G}(M_{k_{+}}%
,1_{+}\otimes u)_{+}=\mathcal{G}(M_{+},u_{+})
\]
as Lie algebra over $k_{+}$ and hence over $k$.
\end{proof}

\bigskip

Suppose $M,u,\mu$ are as in Theorem \ref{thm: split} and $M$ is free over $k
$. \ Let $B=\{m_{1},\ldots,m_{9}\}$ be a basis for $M$ and $\phi_{1}%
,\ldots,\phi_{9}$ the dual basis of $M^{\ast}$; i.e., $\phi_{i}(m_{j}%
)=\delta_{ij}$. \ For
\[
S=\{i_{1}<\cdots<i_{l}\}\subset\{1,\ldots,9\},
\]
let
\begin{align*}
m_{S}  & =m_{i_{1}}\cdots m_{i_{l}},\\
\phi_{S}  & =\phi_{i_{l}}\cdots\phi_{i_{1}},
\end{align*}
so $\{m_{S}:\left\vert S\right\vert =l\}$ and $\{\phi_{S}:\left\vert
S\right\vert =l\}$ are dual bases for $\Lambda_{l}(M)$ and $\Lambda
_{l}(M^{\ast})$. \ Set $u_{B}=m_{\{1,\ldots,9\}}$ and $\mu_{B}=\phi
_{\{1,\ldots,9\}}$. Since $u=au_{B}$, $\mu=b\mu_{B}$ and $1=\mu\cdot u=ab $,
so $a$ and $b$ are invertible, we may replace $m_{1}$ by $am_{1}$ and
$\phi_{1}$ by $b\phi_{1}$ to assume that $u=u_{B}$ and $\mu=\mu_{B}$. \ Now
$e_{ij}:=e(m_{i},\phi_{j})$, $1\leq i,j\leq9$ is a basis for $gl(M) $ and the
matrix of $e_{ij}$ relative to the basis for $M$ is just the usual matrix
unit. \ Let%
\begin{align*}
h_{1}  & =\rho(e_{11}+e_{22}+e_{33})-Id_{\Lambda_{3}(M)},\\
h_{i}  & =\rho(e_{ii}-e_{i-1,i-1})\text{ for }2\leq i\leq8.
\end{align*}

\begin{lemma}
\label{lem: bases}If $M$ is a free module with basis $B=\{m_{1},\ldots
,m_{9}\}$, then
\[
\tilde{B}=\{h_{i}:1\leq i\leq8\}\cup\{\rho(e_{ij}):i\neq j\}
\]
is a basis for $\widetilde{sl}(M)$ and
\[
\hat{B}=\tilde{B}\cup\{m_{S}:\left\vert S\right\vert =3\}\cup\{\phi
_{S}:\left\vert S\right\vert =3\}
\]
is a basis for $\mathcal{G}(M,u_{B})$. \ Thus, $\mathcal{G}(M,u_{B})_{K}$ is
canonically isomorphic to \linebreak$\mathcal{G}(M_{K},u_{B\otimes1}$.$)$ for
any $K\in k$-alg.
\end{lemma}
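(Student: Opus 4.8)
The plan is to reduce everything to the single assertion that $\tilde B$ is a free $k$-basis of $\widetilde{sl}(M)$; the rest then follows formally. Since $\{m_S:\left\vert S\right\vert =3\}$ and $\{\phi_S:\left\vert S\right\vert =3\}$ are already bases of the free modules $\Lambda_3(M)$ and $\Lambda_3(M^\ast)$, once $\tilde B$ is known to be a basis of $\widetilde{sl}(M)$ the union $\hat B$ is automatically a basis of $\mathcal{G}(M,u_B)=\widetilde{sl}(M)\oplus\Lambda_3(M)\oplus\Lambda_3(M^\ast)$, of rank $80+84+84=248$. First I would record $\tilde B\subset\widetilde{sl}(M)$ by computing $T$: each off-diagonal has $T(\rho(e_{ij}))=tr(e_{ij})=0$, while $T(h_i)=tr(e_{ii}-e_{i-1,i-1})=0$ for $2\le i\le 8$ and $T(h_1)=tr(e_{11}+e_{22}+e_{33})-3=0$. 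Since $T(\rho(e_{11}))=tr(e_{11})=1$, the map $T\colon\widetilde{gl}(M)\to k$ is surjective and $\widetilde{gl}(M)=\widetilde{sl}(M)\oplus k\,\rho(e_{11})$. Hence it suffices to prove that $\mathcal{C}:=\tilde B\cup\{\rho(e_{11})\}$, a set of $81$ elements, is a basis of $\widetilde{gl}(M)$: intersecting such a basis with $\ker T$ exhibits $\tilde B$ as a basis of $\widetilde{sl}(M)$.

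Next I would show $\mathcal{C}$ spans $\widetilde{gl}(M)$ and is $k$-linearly independent. Spanning follows from the telescoping identities $\rho(e_{ii})-\rho(e_{11})=h_2+\cdots+h_i$, which recover $\rho(e_{ii})$ for $2\le i\le 8$, together with $Id_{\Lambda_3(M)}=\rho(e_{11})+\rho(e_{22})+\rho(e_{33})-h_1$ and the relation $\sum_j\rho(e_{jj})=\rho(Id_M)=3\,Id_{\Lambda_3(M)}$, which then recover $Id_{\Lambda_3(M)}$ and $\rho(e_{99})$; so every generator $\rho(e_{ij})$, $Id_{\Lambda_3(M)}$ of $\widetilde{gl}(M)$ lies in the span of $\mathcal{C}$. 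For independence I would use the grading of $End(\Lambda_3(M))$ by weight-shift: each $m_S$ is a weight vector of weight $\sum_{j\in S}\epsilon_j$, the operator $\rho(e_{ij})$ with $i\neq j$ is homogeneous of degree $\epsilon_i-\epsilon_j$ (sending a suitable $m_S$ with $j\in S$, $i\notin S$ to $\pm m_{(S\setminus\{j\})\cup\{i\}}$), and the diagonal elements $h_1,\dots,h_8,\rho(e_{11})$ preserve weight. As the degrees $\epsilon_i-\epsilon_j$ are distinct, any relation splits off each $c_{ij}\rho(e_{ij})$ and the whole diagonal part separately; evaluating $\rho(e_{ij})$ on the appropriate $m_S$, where it has a $\pm1$ matrix entry, forces $c_{ij}=0$. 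This reduces independence to the diagonal block.

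For the diagonal block I would argue directly: the element $\sum_j c_j\rho(e_{jj})+c_0\,Id_{\Lambda_3(M)}$ acts on $m_S$ as multiplication by $\sum_{j\in S}c_j+c_0$, so it vanishes iff $\sum_{j\in S}c_j+c_0=0$ for every $3$-subset $S$. Comparing two subsets differing in a single index forces all $c_j$ equal, say $c$, whence $c_0=-3c$; thus the only diagonal relation is $\rho(Id_M)=3\,Id_{\Lambda_3(M)}$, whose coefficient vector $(1,\dots,1,-3)$ is unimodular, so the diagonal subspace is free of rank $9$. Since $h_1,\dots,h_8,\rho(e_{11})$ span it, involve only $\rho(e_{11}),\dots,\rho(e_{88})$ and $Id_{\Lambda_3(M)}$, and none involves $\rho(e_{99})$ — whereas the sole relation does — these nine generators are independent and hence a basis. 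The step most needing care is precisely this uniformity over an arbitrary commutative $k$: all structure constants lie in $\{0,\pm1\}$ and the grading and unimodular relation are integral, so the argument is characteristic-free; in particular the case $3=0$, where $\rho(Id_M)=0$ and the correction term $-Id_{\Lambda_3(M)}$ in $h_1$ becomes essential, is covered automatically. Granting this, $\mathcal{C}$ is a basis of $\widetilde{gl}(M)$, so $\widetilde{gl}(M)$ is free of rank $81$, $\tilde B$ is a basis of $\widetilde{sl}(M)$ of rank $80$, and $\hat B$ is a basis of $\mathcal{G}(M,u_B)$.

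Finally, for the base-change statement I would invoke that $\rho$, the $\cdot$-actions, the graded products and $e(\,,\,)$ all commute with extension of scalars, so the canonical isomorphisms $\Lambda_3(M)_K\cong\Lambda_3(M_K)$, $\Lambda_3(M^\ast)_K\cong\Lambda_3(M_K^\ast)$ and $\widetilde{gl}(M)_K\cong\widetilde{gl}(M_K)$ already at hand carry $\mathcal{C}\otimes1$ to the set $\mathcal{C}$ built from the basis $B\otimes1$ of $M_K$ over $K$. Because $\widetilde{gl}(M)=\widetilde{sl}(M)\oplus k\,\rho(e_{11})$ is a direct-summand decomposition with $T(\rho(e_{11}))=1$, it is preserved by the (possibly non-flat) base change, giving $\widetilde{sl}(M)_K=\ker(T_K)=\widetilde{sl}(M_K)$ with no faithful-flatness hypothesis. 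Matching $\hat B\otimes1$ with the corresponding basis of $\mathcal{G}(M_K,u_{B\otimes1})$ and noting that the bracket's structure constants are the same integers on both sides yields the canonical isomorphism $\mathcal{G}(M,u_B)_K\cong\mathcal{G}(M_K,u_{B\otimes1})$.
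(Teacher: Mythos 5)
Your proposal is correct, and while it rests on the same core computations as the paper, it organizes them differently. The paper works directly inside $\widetilde{sl}(M)$: it first determines the kernel of $(\alpha,b)\mapsto\rho(\alpha)+b\,Id_{\Lambda_{3}(M)}$ by exactly the matrix-entry evaluations on monomials $m_{S}$ that you use (off-diagonal coefficients via the entry of $m_{i}m_{k}m_{s}$ in $\beta(m_{j}m_{k}m_{s})$, diagonal ones via $a_{ii}+a_{jj}+a_{kk}=-b$), concluding that the only relation is $\rho(Id_{M})=3\,Id_{\Lambda_{3}(M)}$; it then proves independence of $\tilde{B}$ from this, and proves spanning by successively subtracting $a_{99}(\rho(Id_{M})-3Id_{\Lambda_{3}(M)})$, the terms $a_{ij}\rho(e_{ij})$, and $-bh_{1}$. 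You instead enlarge to $\mathcal{C}=\tilde{B}\cup\{\rho(e_{11})\}$, prove $\mathcal{C}$ is a basis of $\widetilde{gl}(M)$, and descend to $\widetilde{sl}(M)=\ker T$ via the splitting $\widetilde{gl}(M)=\widetilde{sl}(M)\oplus k\rho(e_{11})$. The computations cost the same either way, but your splitting pays off at the base-change step: a direct-summand decomposition with $T(\rho(e_{11}))=1$ is preserved by arbitrary (not necessarily flat) extension of scalars, which explains structurally why $\widetilde{sl}(M)_{K}\cong\widetilde{sl}(M_{K})$ holds here even though Lemma \ref{lem: isom of extensions} invoked faithful flatness for the same identification; the paper instead gets the final claim by the terser remark that the bijection between $\hat{B}\otimes1$ and $\widehat{B\otimes1}$ induces the isomorphism, leaving tacit the point you make explicit, namely that the structure constants relative to these bases are the same integers on both sides. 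One step in your write-up to tighten: the assertion that $h_{1},\ldots,h_{8},\rho(e_{11})$ are independent because ``none involves $\rho(e_{99})$ whereas the sole relation does'' still requires the (routine, triangular) check that the integer matrix expressing these nine elements in terms of $\rho(e_{11}),\ldots,\rho(e_{88}),Id_{\Lambda_{3}(M)}$ is injective; alternatively, since you have already shown they span a free module of rank $9$, you may simply cite the standard fact that $n$ generators of a free module of rank $n$ over a commutative ring form a basis.
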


\begin{proof}
First, note $T(h_{1})=3-3=0$, so $h_{1}\in\widetilde{sl}(M)$. \ Suppose
$\alpha=\sum_{i,j}a_{ij}e_{ij}\in gl(M)$ and $b\in k$ with $\rho
(\alpha)+bId_{\Lambda_{3}(M)}=0$. \ If $i\neq j$, choose $k,s$ with $i,j,k,s$
distinct. \ We see that $\beta=\rho(e_{ij})$ is the only element among
$\rho(e_{pq}),Id_{\Lambda_{3}(M)}$ with $\beta(m_{j}m_{k}m_{s})$ having a
nonzero coefficient of $m_{i}m_{k}m_{s}$. \ Thus, $a_{ij}=0$ for $i\neq j$.
\ Also,
\[
\rho(\alpha)m_{i}m_{j}m_{k}=\sum_{p=1}^{9}a_{pp}\rho(e_{pp})m_{i}m_{j}%
m_{k}=(a_{ii}+a_{jj}+a_{kk})m_{i}m_{j}m_{k},
\]
so $a_{ii}+a_{jj}+a_{kk}=-b$ for distinct $i,j,k$. \ Thus, $a_{ii}=a$ and
$b=-3a$ for $a=a_{11}$. \ Now suppose
\[
\sum_{i=1}^{8}c_{i}h_{i}+\sum_{1\leq i\neq j\leq9}c_{ij}\rho(e_{ij})=0.
\]
Letting
\begin{align*}
\alpha & =c_{1}(e_{11}+e_{22}+e_{33})+\sum_{i=2}^{8}c_{i}(e_{ii}%
-e_{i-1,i-1})+\sum_{1\leq i\neq j\leq9}c_{ij}e_{ij}\\
& =\sum_{i,j}a_{ij}e_{ij},
\end{align*}
we have $\rho(\alpha)-c_{1}Id_{\Lambda_{3}(M)}=0$. Thus, $c_{ij}=a_{ij}=0$ for
$i\neq j$. \ Also, $a_{99}=0$, so all $a_{ii}=0$ and $c_{1}=-3a_{11}=0$.
\ Moreover, $\sum_{i=2}^{8}c_{i}(e_{ii}-e_{i-1,i-1})=0$ forces all $c_{i}=0$.
\ Thus, $\tilde{B}$ is independent. \ To show that it spans $\widetilde
{sl}(M)$, suppose $\alpha=\sum_{i,j}a_{ij}e_{ij}$ and $x=\rho(\alpha
)+bId_{\Lambda_{3}(M)}\in\widetilde{sl}(M)$; i.e., $tr(\alpha)+3b=0$. \ After
subtracting $a_{99}(\rho(Id_{M})-3Id_{\Lambda_{3}(M)})=0$, we may assume
$a_{99}=0$. \ Subtracting $a_{ij}\rho(e_{ij})$ for $i\neq j$ and $-bh_{1}$, we
can also assume $a_{ij}=0$ for $i\neq j$ and $b=0$. \ Thus, $tr(\alpha)=0$ and
$\rho(\alpha)$ is in the span of $h_{2},\ldots,h_{8}$. \ Thus, $\tilde{B}$ is
a basis for $\widetilde{sl}(M)$, and hence $\hat{B}$ is a basis for
$\mathcal{G}(M,u_{B})$.

Now $B\otimes1:=\{m\otimes1:m\in B\}$ is a basis for $M_{K}$ and $\hat
{B}\otimes1$ is a basis for $\mathcal{G}(M,u_{B})_{K}$. \ The natural
bijection between $\hat{B}\otimes1$ and the basis $\widehat{B\otimes1}$ of
$\mathcal{G}(M_{K},u_{B\otimes1})$ induces a canonical isomorphism
$\mathcal{G}(M,u_{B})_{K}\rightarrow\mathcal{G}(M_{K},u_{B\otimes1})$.
\end{proof}

\bigskip

We remark that the rank of $\mathcal{G}(M,u)$ is $8+9\cdot8+\binom{9}%
{3}+\binom{9}{3}=80+2\cdot84=248$.

\bigskip

\begin{theorem}
\label{thm: type E8}Let $\mathbb{C}^{9}$ be the complex vector space of
dimension $9$ with standard basis $C$. \ Then $\mathcal{G}(\mathbb{C}%
^{9},u_{C})$ is a simple Lie algebra of type $E_{8}$ and $\hat{C}$ is a
Chevalley basis.
\end{theorem}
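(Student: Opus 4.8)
The plan is to locate a Cartan subalgebra inside the Lie algebra $\mathcal{G}(\mathbb{C}^{9},u_{C})$ (already known to be a Lie algebra by Theorem \ref{thm: split}), to read off the root-space decomposition directly from the basis $\hat{C}$ of Lemma \ref{lem: bases}, and then to identify the root system with $E_{8}$ by exhibiting simple roots whose Cartan matrix is that of $E_{8}$; the Chevalley relations will fall out of the same computation.

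First, since $\frac{1}{3}\in\mathbb{C}$ we have $\widetilde{sl}(\mathbb{C}^{9})=\rho(sl(\mathbb{C}^{9}))\cong sl_{9}$, of type $A_{8}$, the isomorphism holding because $\Lambda_{3}(\mathbb{C}^{9})$ is a nontrivial irreducible and hence faithful $sl_{9}$-module. Let $\mathfrak{h}$ be the image under $\rho$ of the traceless diagonal matrices; by Lemma \ref{lem: bases} it is the $8$-dimensional span of $h_{1},\ldots,h_{8}$. Let $\epsilon_{i}\in\mathfrak{h}^{\ast}$ send $\rho(\mathrm{diag}(a_{1},\ldots,a_{9}))$ to $a_{i}$, so $\sum_{i}\epsilon_{i}=0$. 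Using $\rho_{M}(\alpha)^{\ast}=\rho_{M^{\ast}}(\alpha^{\ast})$ from (\ref{eq: rho star}) for the action on the dual, I would record that $\mathfrak{h}$ acts diagonally on $\hat{C}$ with weight $\epsilon_{i}-\epsilon_{j}$ on $\rho(e_{ij})$, weight $\sum_{i\in S}\epsilon_{i}$ on $m_{S}$, and weight $-\sum_{i\in S}\epsilon_{i}$ on $\phi_{S}$. Since the only linear relation among the $\epsilon_{i}$ is $\sum_{i}\epsilon_{i}=0$ and $|S|=3$, none of the $m_{S},\phi_{S}$ has weight $0$; hence the zero weight space is exactly $\mathfrak{h}$, so $\mathfrak{h}$ is abelian and self-normalizing, i.e. a Cartan subalgebra, and the listed weights are the roots. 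There are $72+\binom{9}{3}+\binom{9}{3}=240$ of them, each with a one-dimensional root space.

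Next I would take as simple roots $\alpha_{1}=\epsilon_{1}+\epsilon_{2}+\epsilon_{3}$ and $\alpha_{i}=\epsilon_{i}-\epsilon_{i-1}$ for $2\leq i\leq8$, with root vectors $m_{\{1,2,3\}}$ and $\rho(e_{i,i-1})$. A short computation identifies $h_{j}$ as the coroot $\alpha_{j}^{\vee}$: indeed $[\rho(e_{i,i-1}),\rho(e_{i-1,i})]=\rho(e_{ii}-e_{i-1,i-1})=h_{i}$, while Lemma \ref{lem: e properties}(iii),(iv) give $[m_{\{1,2,3\}},\phi_{\{1,2,3\}}]=\delta(m_{\{1,2,3\}},\phi_{\{1,2,3\}})=\rho(e_{11}+e_{22}+e_{33})-Id_{\Lambda_{3}(M)}=h_{1}$. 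Evaluating $\alpha_{i}(h_{j})$ then yields the Dynkin diagram formed by the chain $\alpha_{2}-\alpha_{3}-\alpha_{4}-\alpha_{5}-\alpha_{6}-\alpha_{7}-\alpha_{8}$ with $\alpha_{1}$ attached to $\alpha_{4}$; the trivalent node $\alpha_{4}$ has arms of lengths $1,2,4$, which is exactly the $E_{8}$ diagram. Each root $\alpha$ furnishes an $sl_{2}$-triple $(e_{\alpha},h_{\alpha},e_{-\alpha})$, so the set of $240$ roots is stable under the simple reflections and therefore contains the full $E_{8}$ system generated by $\alpha_{1},\ldots,\alpha_{8}$; since both have $240$ elements they coincide, and standard structure theory shows $\mathcal{G}(\mathbb{C}^{9},u_{C})$ is simple of type $E_{8}$.

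Finally, to show $\hat{C}$ is a Chevalley basis I would verify the normalization and integrality of the structure constants. Because $E_{8}$ is simply laced, all root strings have length at most two and every nonzero $[e_{\alpha},e_{\beta}]$ with $\alpha+\beta$ a root equals $\pm e_{\alpha+\beta}$, so it suffices to check that each bracket of basis vectors is $\pm$ a single basis vector and that $[e_{\alpha},e_{-\alpha}]$ is the integral coroot $h_{\alpha}$. The $A_{8}$-block is the standard Chevalley basis of $sl_{9}$; the mixed brackets reduce, via Lemmas \ref{lem: dot action} and \ref{lem: e properties}, to $D_{e_{ij}}(m_{S})=\pm m_{S'}$, to $[m_{S},m_{S'}]=(m_{S}m_{S'})\cdot\mu=\pm\phi_{S''}$ and $[\phi_{S},\phi_{S'}]=(\phi_{S}\phi_{S'})\cdot u=\pm m_{S''}$ when $S\cap S'=\emptyset$ (and $0$ otherwise), and to $[m_{S},\phi_{S'}]=\delta(m_{S},\phi_{S'})$, which is $\pm\rho(e_{ab})$ when $|S\cap S'|=2$, a coroot in $\mathfrak{h}$ when $S=S'$, and $0$ when $|S\cap S'|\leq1$. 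In every case the coefficient is $\pm1$, or the bracket of opposite root vectors is the coroot $h_{\alpha}$, so $\hat{C}$ is a Chevalley basis. The main difficulty here is not conceptual but is the exterior-algebra sign bookkeeping needed to force every constant into $\{0,\pm1\}$; the ordering conventions $\phi_{S}=\phi_{i_{l}}\cdots\phi_{i_{1}}$ together with $u=u_{C}$ and $\mu=\mu_{C}$ are exactly what make $m_{S}\cdot\phi_{S}=1$ and keep the constants normalized, so the bulk of the work lies in confirming these signs.
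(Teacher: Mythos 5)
Your proposal is correct, and for the first half it matches the paper: the same Cartan subalgebra $\rho(\mathcal{H})$ (the paper certifies it by a centralizer argument, you by a zero-weight-space/normalizer argument, which amounts to the same thing), the same $240$ weights, and the same simple system $\alpha_1=\varepsilon_1+\varepsilon_2+\varepsilon_3$, $\alpha_i=\varepsilon_i-\varepsilon_{i-1}$ with the $E_8$ diagram read off from Cartan integers. You diverge in the two remaining verifications. For simplicity, the paper argues module-theoretically before any root computation: $\widetilde{sl}(M)$, $\Lambda_3(M)$ and $\Lambda_3(M^{\ast})$ are pairwise nonisomorphic irreducible $\widetilde{sl}(M)$-modules, so by complete reducibility any nonzero ideal contains one of them, and the displayed nonzero brackets force it to contain all three; you instead extract simplicity from the root decomposition (one-dimensional root spaces, nonvanishing $[\mathcal{G}_\alpha,\mathcal{G}_{-\alpha}]$, connected diagram) via ``standard structure theory''---valid, but that phrase hides the ideal-chasing the paper makes explicit. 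For the Chevalley-basis claim the routes genuinely differ. The paper uses the criterion from Humphreys whose substantive condition is the existence of an automorphism sending $x_\alpha\mapsto -x_{-\alpha}$, $h_i\mapsto -h_i$, and it gets this automorphism for free: it is the map $\zeta$ of Theorem \ref{thm: twisted Lie algebra} applied with the trivial involution on $\mathbb{C}$ and the standard symmetric form $h(m_i,m_j)=\delta_{ij}$; no structure constants are ever computed. You instead verify integrality directly, using simple-lacedness to reduce to showing every constant is $0$ or $\pm1$ together with coroot normalization. This works, with two caveats you should make explicit: first, against the definition the paper cites (Humphreys, p.~147) a Chevalley basis requires the sign symmetry $c_{-\alpha,-\beta}=-c_{\alpha\beta}$, not merely $|c_{\alpha\beta}|=1$; this does follow from your data, since coroot normalization and invariance of the Killing form give $c_{\alpha\beta}\,c_{-\alpha,-\beta}=-q(r+1)=-1$ in the simply laced case, but the step needs saying. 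Second, the case-by-case sign bookkeeping you defer (your case list and the stated outcomes $[m_S,m_{S'}]=\pm\phi_{S''}$, $[m_S,\phi_{S'}]=\pm\rho(e_{ab})$ or a coroot or $0$ according to $|S\cap S'|$, are all correct) is precisely the labor the paper's automorphism trick is designed to avoid, so your route is more elementary and self-contained but substantially longer if written out in full.
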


\begin{proof}
Let $M=\mathbb{C}^{9}$, $C=\{m_{1},\ldots,m_{9}\}$, $u=u_{C}$, and $\mu
=\mu_{C}$. \ Since $\frac{1}{3}\in\mathbb{C}$, $\rho:sl(M)\rightarrow
\widetilde{sl}(M)$ is an isomorphism. \ Now $\widetilde{sl}(M)$, $\Lambda
_{3}(M)$, and $\Lambda_{3}(M^{\ast})$ are nonisomorphic irreducible
$\widetilde{sl}(M)$-modules, so they are the only irreducible $\widetilde
{sl}(M)$-modules in $\mathcal{G}(M,u)$. \ Thus, if $I$ is a nonzero ideal of
$\mathcal{G}(M,u)$, then complete reducibility shows that $I$ contains at
least one of these submodules. \ Moreover,
\begin{align*}
0  & \neq[\widetilde{sl}(M),\Lambda_{3}(M)]\subset\Lambda_{3}(M),\\
0  & \neq[\widetilde{sl}(M),\Lambda_{3}(M^{\ast})]\subset\Lambda_{3}(M^{\ast
}),\\
0  & \neq[\Lambda_{3}(M),\Lambda_{3}(M^{\ast})]\subset\widetilde{sl}(M),
\end{align*}
so $I$ contains each of these submodules. \ Thus, $\mathcal{G}(M,u)$ is
simple. \ Let $\mathcal{H}$ be the trace $0$ diagonal maps of $M$ relative to
the given basis, so $\mathcal{H}$ is a Cartan subalgebra of $sl(M)$, and
$\mathcal{\tilde{H}=\rho(H)}$ is a Cartan subalgebra of $\widetilde{sl}(M)$.
\ Since $h_{1}=\rho(e_{11}+e_{22}+e_{33}-\frac{1}{3}Id_{M})$, we see $h_{i}$,
$1\leq i\leq8$ is a basis for $\mathcal{\tilde{H}}$. \ The centralizer of
$\mathcal{\tilde{H}}$ in $\mathcal{G}(M,u)$ is contained in $\widetilde
{sl}(M)$ and is hence $\mathcal{\tilde{H}}$. \ Thus, $\mathcal{\tilde{H}}$ is
a Cartan subalgebra of $\mathcal{G}(M,u)$. \ Let $\varepsilon_{i}%
\in\mathcal{\tilde{H}}^{\ast}$ with $\varepsilon_{i}(h)=a_{i}$ where
$\rho^{-1}(h)=diag(a_{1},\ldots,a_{9})\in\mathcal{H}$, as a diagonal matrix.
\ Clearly, $\sum_{i=1}^{9}\varepsilon_{i}=0$. \ We see that the roots $\Sigma$
of $\mathcal{\tilde{H}}$ for $\mathcal{G}(M,u)$ are all $\varepsilon
_{i}-\varepsilon_{j}$ for $i\neq j$ (in $\widetilde{sl}(M)$) and all
$\pm(\varepsilon_{i}+\varepsilon_{j}+\varepsilon_{k})$ for distinct $i,j,k$
(in $\Lambda_{3}(M)$ and $\Lambda_{3}(M^{\ast})$). \ Let $\alpha
_{1}=\varepsilon_{1}+\varepsilon_{2}+\varepsilon_{3}$ and $\alpha
_{i}=\varepsilon_{i}-\varepsilon_{i-1}$ for $2\leq i\leq8$. \ Now
$\Pi=\{\alpha_{1},\ldots,\alpha_{8}\}$ is a basis of $\mathcal{\tilde{H}%
}^{\ast}$. \ Moreover, an examination of the $\alpha_{j}$-string through
$\alpha_{i}$ shows that $\Pi$ is a fundamental system of roots with Dynkin
diagram $E_{8}$ with $\alpha_{2},\ldots,\alpha_{8}$ forming a diagram of type
$A_{7}$ and $\alpha_{1}$ connected to $\alpha_{4}$. \ Hence, $\mathcal{G}%
(M,u)$ is a Lie algebra of type $E_{8}$. \ To show that $\hat{C}$ is a
Chevalley basis, we need to show (\cite{H72}, p. 147)

\qquad(a) for each root $\alpha$, there is $x_{\alpha}\in\hat{C}%
\cap\mathcal{G}(M,u)_{\alpha}$,

\qquad(b) $[x_{\alpha},x_{-\alpha}]=h_{\alpha}$ with $[h_{\alpha},x_{\alpha
}]=2x_{\alpha}$,

\qquad(c) $h_{\alpha_{i}}=h_{i}$,

\qquad(d) the linear map with $x_{\alpha}\rightarrow-x_{-\alpha}$,
$h_{i}\rightarrow-h_{i}$ is an automorphism of $\mathcal{G}(M,u)$.

\noindent Clearly, $x_{\alpha}=\rho(e_{ij})$ for $\alpha=\varepsilon
_{i}-\varepsilon_{j}$, $x_{\alpha}=m_{S}$ and $x_{-\alpha}=\phi_{S}$ for
$\alpha=\varepsilon_{i}+\varepsilon_{j}+\varepsilon_{k}$ and $S=\{i<j<k\} $
satisfies (a). \ Now $[[e_{ij},e_{ji}],e_{ij}]=[e_{ii}-e_{jj},e_{ij}]=2e_{ij}%
$, so (b) holds for $\alpha=\varepsilon_{i}-\varepsilon_{j}$ and (c) holds for
$i\neq1$. \ Lemma \ref{lem: e properties}(v) with $l=1$ shows
\begin{align*}
e(m_{S},\phi_{S})  & =e(m_{i}m_{j}m_{k},\phi_{k}\phi_{j}\phi_{i}%
)=\sum\limits_{i,j,k\circlearrowleft}e(m_{i},(m_{j}m_{k})\cdot(\phi_{k}%
\phi_{j}\phi_{i}))\\
& =e_{ii}+e_{jj}+e_{kk}.
\end{align*}
Thus,
\begin{align*}
\lbrack m_{S},\phi_{S}]  & =\rho(e(m_{S},\phi_{S})-\frac{1}{3}(m_{S}\cdot
\phi_{S})Id_{M})\\
& =\rho(e_{ii}+e_{jj}+e_{kk}-\frac{1}{3}Id_{M}),
\end{align*}
so (b) holds for $\alpha=\pm(\varepsilon_{i}+\varepsilon_{j}+\varepsilon_{k})$
and (c) holds for $i=1$. \ Finally, let $\mathbb{C}$ have the trivial
involution and let $h$ be the symmetric bilinear form on $M$ with
$h(m_{i},m_{j})=\delta_{ij}$. \ Thus, $\eta$ as in Theorem
\ref{thm: twisted Lie algebra} has $\eta(m_{i})=\phi_{i}$. \ Now $\theta
_{\eta}(m_{C})=\phi_{1}\cdots\phi_{9}=\phi_{9}\cdots\phi_{1}=\phi_{C}$, and we
have an automorphism $\zeta$ given by Theorem \ref{thm: twisted Lie algebra}.
\ Since $\theta_{\eta}^{-1}\rho(\beta)^{\ast}\theta_{\eta}=\rho(\tau(\beta))$
for $\beta\in sl(M)$ where $\tau(e_{ij})=e_{ji}$, we see that $\zeta
(h_{i})=-h_{i}$, and $\zeta(x_{\alpha})=-x_{-\alpha}$ for $\alpha
=\varepsilon_{i}-\varepsilon_{j}$. \ Also, $\zeta(x_{\alpha})=\theta_{\eta
}(m_{S})=\phi_{i}\phi_{j}\phi_{k}=-\phi_{S}=-x_{-\alpha}$ for $\alpha
=\varepsilon_{i}+\varepsilon_{j}+\varepsilon_{k}$ and $S=\{i<j<k\}$. \ Thus,
(d) holds and $\hat{C}$ is a Chevalley basis.
\end{proof}

\bigskip

Let $\mathcal{G}(\mathbb{C)}$ be a simple Lie algebra over $\mathbb{C}$ of
type $X_{l}$ and let $\mathcal{G}(\mathbb{Z})$ be the $\mathbb{Z}$-span of a
Chevalley basis of $\mathcal{G}(\mathbb{C)}$. \ Up to isomorphism,
$\mathcal{G}(\mathbb{Z})$ is independent of the choice of Chevalley basis
(\cite{H72}, p. 150, Exercise 5). \ Set $\mathcal{G}(k)=\mathcal{G}%
(\mathbb{Z})_{k}$. \ We say that a Lie algebra $\mathcal{G}$ over $k$ is a
\textit{split form} of $X_{l}$ if $\mathcal{G\cong G}(k)$ and that
$\mathcal{G}$ is a \textit{form} of $X_{l}$ if $\mathcal{G}_{F}\cong
\mathcal{G}(F)$ for some faithfully flat $F\in k$-alg. \ If $F\in k$-alg and
$E\in F$-alg are faithfully flat, then $E\in k$-alg is faithfully flat.
\ Thus, if $\mathcal{G}_{F}$ is a form of $X_{l}$ for some faithfully flat
$F\in k$-alg, then $\mathcal{G}$ is a form of $X_{l}$.

\begin{corollary}
The Lie algebra $\mathcal{G}(M,u)$ in Theorem \ref{thm: split} is a form of
$E_{8}$ and is a split form if $M$ is free. \ If $K$ is a quadratic \'{e}tale
$k$-algebra, then the Lie algebra $\mathcal{G}(M,h,u)$ in Theorem
\ref{thm: twisted Lie algebra} is a form of $E_{8}$.
\end{corollary}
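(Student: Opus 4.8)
The plan is to treat the three assertions in order, bootstrapping each from the previous one and from the integral computation already carried out in Theorem~\ref{thm: type E8}.

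First I would prove the split (free) claim over a universal integral base. Let $\mathbb{Z}^{9}$ be the free $\mathbb{Z}$-module of rank $9$ with standard basis $D$ and associated top element $u_{D}\in\Lambda_{9}(\mathbb{Z}^{9})$; the algebra $\mathcal{G}(\mathbb{Z}^{9},u_{D})$ is defined by Theorem~\ref{thm: split} (the construction uses no $\frac{1}{3}$), and by Lemma~\ref{lem: bases} the set $\hat{D}$ is a $\mathbb{Z}$-basis, so the bracket has well-defined integer structure constants in $\hat{D}$. Base-changing to $\mathbb{C}$, Lemma~\ref{lem: bases} gives $\mathcal{G}(\mathbb{Z}^{9},u_{D})_{\mathbb{C}}\cong\mathcal{G}(\mathbb{C}^{9},u_{C})$ carrying $\hat{D}\otimes1$ to $\hat{C}$, and Theorem~\ref{thm: type E8} identifies this as $E_{8}$ with Chevalley basis $\hat{C}$. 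Since $\mathbb{Z}\hookrightarrow\mathbb{C}$, the integer structure constants of $\hat{D}$ are exactly the Chevalley structure constants, whence $\mathcal{G}(\mathbb{Z}^{9},u_{D})$ is the $\mathbb{Z}$-span of a Chevalley basis and so $\mathcal{G}(\mathbb{Z}^{9},u_{D})\cong\mathcal{G}(\mathbb{Z})$. If $M$ is free over $k$, a basis $B$ may be chosen with $u=u_{B}$ (the reduction preceding Lemma~\ref{lem: bases}), and the bijection $\hat{B}\leftrightarrow\hat{D}\otimes1$ yields $\mathcal{G}(M,u)\cong\mathcal{G}(\mathbb{Z}^{9},u_{D})_{k}\cong\mathcal{G}(\mathbb{Z})_{k}=\mathcal{G}(k)$; thus $\mathcal{G}(M,u)$ is a split form.

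For general finitely generated projective $M$ of rank $9$, I would produce a faithfully flat $F\in k$-alg over which $M$ becomes free. Since $M$ is locally free of finite rank, each prime lies in a principal open $D(f)$ with $M_{f}$ free of rank $9$ over $k_{f}$; choosing $f_{1},\ldots,f_{r}$ generating the unit ideal and setting $F=\prod_{i}k_{f_{i}}$ gives a flat algebra whose spectrum surjects onto $\operatorname{Spec} k$, hence faithfully flat, with $M_{F}\cong\prod_{i}M_{f_{i}}$ free of rank $9$ over $F$. By Lemma~\ref{lem: isom of extensions}, $\mathcal{G}(M,u)_{F}\cong\mathcal{G}(M_{F},u_{F})$, and the split case applied over $F$ gives $\mathcal{G}(M_{F},u_{F})\cong\mathcal{G}(F)$. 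Therefore $\mathcal{G}(M,u)_{F}\cong\mathcal{G}(F)$ and $\mathcal{G}(M,u)$ is a form of $E_{8}$.

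Finally, for the hermitian case I would base-change along the faithfully flat extension $k\to K$ (Proposition~\ref{prop: quadratic etale}). By Lemma~\ref{lem: isom of extensions}, $\mathcal{G}(M,h,u)_{K}\cong\mathcal{G}(M_{K},h_{K},u_{K})$, where over the new base $K$ the governing quadratic \'{e}tale algebra is $K\otimes_{k}K$, which Proposition~\ref{prop: quadratic etale} identifies, compatibly with involutions, with the split algebra $K\oplus K$ carrying the exchange involution; moreover $M_{K}=M\otimes_{k}K$ is canonically a finitely generated projective $K\otimes_{k}K$-module of rank $9$ and $u_{K}\cdot u_{K}=1$ by base change. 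Hence Lemma~\ref{lem: split K}, applied over the base $K$, gives $\mathcal{G}(M_{K},h_{K},u_{K})\cong\mathcal{G}((M_{K})_{+},(u_{K})_{+})$, an untwisted algebra built from a rank-$9$ projective $K$-module, which is a form of $E_{8}$ over $K$ by the preceding paragraph. As $K$ is faithfully flat over $k$, the remark preceding the corollary shows $\mathcal{G}(M,h,u)$ is a form of $E_{8}$.

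I expect the conceptual crux to be the integral identification in the first step---namely that the bracket of $\mathcal{G}(\mathbb{Z}^{9},u_{D})$ really does have integer structure constants in $\hat{D}$ equal to the Chevalley constants of $E_{8}$, which is what transports splitness to every $k$ and underlies all three assertions. The remaining difficulty is bookkeeping in the hermitian step: one must verify that the canonical isomorphism $K\otimes_{k}K\cong K\oplus K$ carries the induced involution to the exchange involution and the data $(h_{K},u_{K})$ to precisely the split data required by Lemma~\ref{lem: split K}.
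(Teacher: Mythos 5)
Your proposal is correct and follows essentially the same route as the paper: the split (free) case via Lemma \ref{lem: bases} and the integral Chevalley-basis identification $\mathcal{G}(\mathbb{Z}^{9},u_{C})=\mathcal{G}(\mathbb{Z})$ from Theorem \ref{thm: type E8}, the general case by passing to a faithfully flat $F$ over which $M$ becomes free and invoking Lemma \ref{lem: isom of extensions}, and the hermitian case by base change to $K$ using Proposition \ref{prop: quadratic etale} together with Lemmas \ref{lem: isom of extensions} and \ref{lem: split K} and the transitivity remark preceding the corollary. The only cosmetic difference is that you construct the splitting algebra $F=\prod_{i}k_{f_{i}}$ explicitly, where the paper simply cites Bourbaki (II.5, Exercise 8).
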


\begin{proof}
If $\hat{C}$ is the Chevalley basis of $\mathcal{G}(\mathbb{C}^{9},u_{C})$
given by Theorem \ref{thm: type E8}, we can identify $C$ with the standard
basis of $\mathbb{Z}^{9}$ and $\hat{C}$ with the corresponding basis for
$\mathcal{G}(\mathbb{Z}^{9},u_{C})$. \ In particular, $\mathcal{G}%
(\mathbb{Z}^{9},u_{C})=\mathcal{G}(\mathbb{Z})$, the $\mathbb{Z}$-span
$\hat{C}$. \ If $M,u$ are as in Theorem \ref{thm: split} with $M$ free, we can
choose a \ basis $B$ for $M$ with $u=u_{B}$ and $\mu=\mu_{B}$. \ The
isomorphism $M\rightarrow\mathbb{Z}_{k}^{9}\cong k^{9}$ taking $B$ to
$C\otimes1$ induces an isomorphism $\mathcal{G}(M,u_{B})\rightarrow
\mathcal{G}(\mathbb{Z}_{k}^{9},u_{C\otimes1})$. \ Since
\[
\mathcal{G}(k)=\mathcal{G}(\mathbb{Z})_{k}=\mathcal{G}(\mathbb{Z}^{9}%
,u_{C})_{k}\cong\mathcal{G}(\mathbb{Z}_{k}^{9},u_{C\otimes1}),
\]
by Lemma \ref{lem: bases}, we see that $\mathcal{G}(M,u)$ is a split form if
$M$ is free. \ For the general case, we know there is a faithfully flat $F\in
k$-alg with $M_{F}$ a free $k_{F}$-module of rank $9$ (\cite{B89}, II.5,
Exercise 8). \ By Lemma \ref{lem: isom of extensions} and the result for free
$M$, we see
\[
\mathcal{G}(M,u)_{F}\cong\mathcal{G}(M_{F},u_{F})\cong\mathcal{G}(F)
\]
and $\mathcal{G}(M,u)$ is a form of $E_{8}$.

For $M,h,u$ as in Theorem \ref{thm: twisted Lie algebra} with $K$ a quadratic
\'{e}tale $k$-algebra, we know\ by Proposition \ref{prop: quadratic etale}
that $K$ is faithfully flat and $K_{K}\cong K\oplus K$. \ Thus,
\begin{equation}
\mathcal{G}(M,h,u)_{K}\cong\mathcal{G}(M_{K},h_{K},u_{K})\cong\mathcal{G}%
((M_{K})_{+},(u_{K})_{+})\label{eq: quad etale untwists}%
\end{equation}
by Lemmas \ref{lem: isom of extensions} and \ref{lem: split K}, so
$\mathcal{G}(M,h,u)_{K}$ and hence $\mathcal{G}(M,h,u)$ are forms of $E_{8}$.
\end{proof}

\bigskip

\begin{theorem}
Let $M,u,\mu$ be as in Theorem \ref{thm: split}

$\qquad$(i) If $M=M_{1}\oplus M_{2}$ with $M_{1}$ of rank $3$ and $M_{2}$ of
rank $6$, then
\[
\mathcal{G}(M_{1},M_{2},u)=[M_{1}\Lambda_{2}(M_{2}),M_{1}^{\ast}\Lambda
_{2}(M_{2}^{\ast})]\oplus M_{1}\Lambda_{2}(M_{2})\oplus M_{1}^{\ast}%
\Lambda_{2}(M_{2}^{\ast})
\]
is a Lie subalgebra of $\mathcal{G}(M,u)$ and a form of $E_{7}$.

\qquad(ii) $M=M_{1}\oplus M_{2}\oplus M_{3}$ with each $M_{i}$ of rank $3$,
then%
\[
\mathcal{G}(M_{1},M_{2},M_{3},u)=[M_{1}M_{2}M_{3},M_{1}^{\ast}M_{2}^{\ast
}M_{3}^{\ast}]\oplus M_{1}M_{2}M_{3}\oplus M_{1}^{\ast}M_{2}^{\ast}M_{3}%
^{\ast}%
\]
is a Lie subalgebra of $\mathcal{G}(M,u)$ and a form of $E_{6}$.

Let $M,h,u$ as in Theorem \ref{thm: twisted Lie algebra} with $K$ a quadratic
\'{e}tale $k$-algebra. \ Set $d(x,y)=\delta(x,y)-\delta(y,x)$ for
$x,y\in\Lambda_{3}(M)$.

$\qquad$(iii) If $M=M_{1}\perp M_{2}$ with $M_{1}$ of rank $3$ and $M_{2}$ of
rank $6$, then
\[
\mathcal{G}(M_{1},M_{2},h,u)=d(M_{1}\Lambda_{2}(M_{2}),M_{1}\Lambda_{2}%
(M_{2}))\oplus M_{1}\Lambda_{2}(M_{2})
\]
is a Lie subalgebra of $\mathcal{G}(M,h,u)$ and a form of $E_{7}$.

\qquad(iv) $M=M_{1}\perp M_{2}\perp M_{3}$ with each $M_{i}$ of rank $3$, then%
\[
\mathcal{G}(M_{1},M_{2},M_{3},h,u)=d(M_{1}M_{2}M_{3},M_{1}M_{2}M_{3})\oplus
M_{1}M_{2}M_{3}%
\]
is a Lie subalgebra of $\mathcal{G}(M,h,u)$ and a form of $E_{6}$.
\end{theorem}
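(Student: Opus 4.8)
The plan is to reduce every case to the split Lie algebra $\mathcal{G}(\mathbb{C}^9,u_C)$ of Theorem \ref{thm: type E8} and to read off the appropriate subsystem of the $E_8$ root system. First I would verify the subalgebra assertions. Writing $P=M_1\Lambda_2(M_2)$ and $P^\ast=M_1^\ast\Lambda_2(M_2^\ast)$ in case (i) (and $P=M_1M_2M_3$, $P^\ast=M_1^\ast M_2^\ast M_3^\ast$ in case (ii)), the decomposition $M=\bigoplus M_i$ induces a multigrading of $\Lambda(M)$ and of $\Lambda(M^\ast)$. By the product formulas of Theorem \ref{thm: split}, the bracket $[x,y]=(xy)\cdot\mu$ maps $\Lambda_6(M)$ into $\Lambda_3(M^\ast)$, and complementation against the volume form $\mu$ shows that $P\cdot P$ has complementary multidegree and lands in $P^\ast$; likewise $[P^\ast,P^\ast]\subset P$, while $[P,P^\ast]\subset\widetilde{sl}(M)$ by construction. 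That $L:=[P,P^\ast]$ preserves $P$ and $P^\ast$ follows because, by Lemma \ref{lem: e properties}(iii), the operators $\rho(e(x,\xi))$ with $x\in P$, $\xi\in P^\ast$ act as derivations respecting the multigrading. These are identities of multilinear maps between finitely generated projective modules, so by the localization principle of \S 2 it suffices to check them after localizing at each prime, i.e.\ for free modules, where they are immediate; hence $L\oplus P\oplus P^\ast$ is a subalgebra over any $k$.

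Next I would treat the free split case over $\mathbb{C}$. Taking $M=\mathbb{C}^9$ with the basis and notation of Theorem \ref{thm: type E8} and $M_1,M_2$ (resp.\ $M_1,M_2,M_3$) the coordinate blocks, the root spaces inside $P\oplus P^\ast$ are exactly the $x_{\pm\alpha}$ with $\alpha=\varepsilon_a+\varepsilon_j+\varepsilon_k$ meeting the blocks in the prescribed way, and the bracket computation of the $E_8$ proof (using Lemma \ref{lem: e properties}(v)) shows that $L$ is spanned by the $\rho(e_{ij})$ with $i,j$ in a common block together with the coroots $[x_\alpha,x_{-\alpha}]$. One checks that all these coroots lie in the codimension-one (resp.\ codimension-two) subspace $\mathfrak{h}'$ of $\tilde{\mathcal{H}}$ cut out by the vanishing of the block traces, that $\mathfrak{h}'$ is its own centralizer in the subalgebra, and that the occurring $E_8$ roots restrict to $\mathfrak{h}'$ as distinct nonzero functionals. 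This produces a reduced root system of rank $7$ with $45+45+6+30=126$ roots in case (i) and of rank $6$ with $27+27+18=72$ roots in case (ii); since among reduced irreducible systems these data characterize $E_7$ and $E_6$, the subalgebra is simple of the asserted type. Because the occurring set of roots is closed and symmetric, the associated members of the Chevalley basis $\hat{C}$, together with the coroots of a set of simple roots, inherit the Chevalley basis conditions (a)--(d) of Theorem \ref{thm: type E8}, so over $\mathbb{Z}$ the subalgebra is $\mathcal{G}(\mathbb{Z})$ for $E_7$ (resp.\ $E_6$).

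Finally I would descend. Exactly as in Lemma \ref{lem: isom of extensions}, the construction of $L\oplus P\oplus P^\ast$ commutes with faithfully flat base change, since the defining submodules and the exact sequence cutting out $L$ are preserved; thus $\mathcal{G}(M_1,M_2,u)_F\cong\mathcal{G}((M_1)_F,(M_2)_F,u_F)$. Choosing a faithfully flat $F$ that simultaneously trivializes the $M_i$ (a tensor product of the individual trivializing algebras) and invoking the free case shows that $\mathcal{G}(M_1,M_2,u)$ and $\mathcal{G}(M_1,M_2,M_3,u)$ are forms of $E_7$ and $E_6$. For the hermitian cases (iii) and (iv) I would base change along the quadratic \'etale algebra $K$: by Proposition \ref{prop: quadratic etale} and Lemma \ref{lem: split K} this untwists $\mathcal{G}(M,h,u)$ to the split construction over $K$, carrying the orthogonal decomposition $M=M_1\perp\cdots$ to the corresponding direct sum decomposition of $(M_K)_+$, and hence $d(P,P)\oplus P$ to the untwisted subalgebra $\mathcal{G}(((M_1)_K)_+,((M_2)_K)_+,(u_K)_+)$ of part (i) (resp.\ (ii)); as $K$ is faithfully flat over $k$, these are again forms of $E_7$ and $E_6$.

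The main obstacle I anticipate is the correct identification of the intrinsic Cartan subalgebra of the subalgebra: it is the proper subspace $\mathfrak{h}'\subset\tilde{\mathcal{H}}$ spanned by the coroots, not a Cartan of $E_8$, so the subalgebra's roots are the \emph{restrictions} of the occurring $E_8$ roots to $\mathfrak{h}'$. Confirming that these restrictions remain distinct, so that the root counts $126$ and $72$ are accurate, and that $\mathfrak{h}'$ is self-centralizing, is the delicate point on which the type identification rests; the remaining work is bookkeeping with multidegrees and the descent machinery already established.
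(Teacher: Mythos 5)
Your proposal follows the same skeleton as the paper in three of its four steps: the subalgebra verification by block decomposition (the paper does this directly, using Lemma \ref{lem: e properties}(v) to get $e(M_{1}M_{2}M_{3},M_{1}^{\ast}M_{2}^{\ast}M_{3}^{\ast})\subset\sum_{i}e(M_{i},M_{i}^{\ast})$, rather than your localization appeal, but both work), the faithfully flat descent to the free split case with a basis compatible with the decomposition, and the untwisting of cases (iii)--(iv) through the quadratic \'{e}tale algebra $K$ via Proposition \ref{prop: quadratic etale} and Lemma \ref{lem: split K}. Where you genuinely diverge is the identification of the type over $\mathbb{C}$, and that is where your argument breaks.

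The classification claim you rest on is false in the $E_{6}$ case: among reduced irreducible root systems of rank $6$, the count of $72$ roots does \emph{not} single out $E_{6}$, since $B_{6}$ and $C_{6}$ each have $2\cdot6^{2}=72$ roots as well. (Your count does suffice for $E_{7}$: $126$ exceeds the $98$ roots of $B_{7}/C_{7}$, the $84$ of $D_{7}$, and anything reducible of rank $7$.) So cases (ii) and (iv) are not established; you would need in addition that the root system is simply laced, or to exhibit a base and compute its Dynkin diagram. There is a second, subtler gap: counting weights of a self-centralizing $\mathfrak{h}'$ identifies the type only after you know the subalgebra is \emph{semisimple}, so that its nonzero $\mathfrak{h}'$-weights form a root system --- a Borel subalgebra shows that self-centralizing plus one-dimensional weight spaces is not enough. (This is repairable: your subalgebra is the derived algebra of the centralizer of $h'$, resp.\ of $\{h',h''\}$, hence semisimple, but you never say so.) The paper avoids both issues at once: it chooses a new fundamental system $\tilde{\Pi}=\{\beta_{1},\ldots,\beta_{8}\}$ of the $E_{8}$ system adapted to the blocks, checks by root strings that its diagram is $E_{8}$ with $\beta_{2},\ldots,\beta_{8}$ of type $A_{7}$, and then cuts out $\Sigma'=\{\alpha:\alpha(h')=0\}$ and $\Sigma''=\{\alpha:\alpha(h')=\alpha(h'')=0\}$; since $\alpha(h')=3c_{8}$ and $\alpha(h'')=c_{7}$ in the coordinates of $\tilde{\Pi}$, these are exactly the subsystems with bases $\{\beta_{1},\ldots,\beta_{7}\}$ and $\{\beta_{1},\ldots,\beta_{6}\}$, whose diagrams are visibly $E_{7}$ and $E_{6}$. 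The standard fact that the subalgebra generated by the root spaces of such a subsystem is simple of that type, with Chevalley basis $\tilde{C}\cap\mathcal{G}'$, then also settles the $\mathbb{Z}$-form assertion that your ``inherit conditions (a)--(d)'' sentence leaves vague --- note in particular that condition (c) forces the replacement of the $h_{i}$ by the coroots $\tilde{h}_{i}=h_{\beta_{i}}$ of the new simple roots, which is exactly the substitution the paper makes.
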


\begin{proof}
We show that $\mathcal{G}(M_{1},M_{2},M_{3},u)$ is a subalgebra, and the other
cases can be handled similarly. \ Since $M_{i}\cdot M_{j}^{\ast}=0$ for $i\neq
j$, we see%
\begin{align*}
& ((M_{1}M_{2}M_{3})(M_{1}M_{2}M_{3}))\cdot\Lambda_{9}(M)\\
& =((M_{1}M_{2}M_{3})(M_{1}M_{2}M_{3}))\cdot\Lambda_{3}(M_{1}^{\ast}%
)\Lambda_{3}(M_{2}^{\ast})\Lambda_{3}(M_{3}^{\ast})\\
& \subset M_{1}^{\ast}M_{2}^{\ast}M_{3}^{\ast}.
\end{align*}
Thus,%
\[
\lbrack M_{1}M_{2}M_{3},M_{1}M_{2}M_{3}]\subset M_{1}^{\ast}M_{2}^{\ast}%
M_{3}^{\ast}%
\]
and similarly%
\[
\lbrack M_{1}^{\ast}M_{2}^{\ast}M_{3}^{\ast},M_{1}^{\ast}M_{2}^{\ast}%
M_{3}^{\ast}]\subset M_{1}M_{2}M_{3}.\
\]
Also,%
\[
(M_{i}M_{j})\cdot(M_{1}^{\ast}M_{2}^{\ast}M_{3}^{\ast})\subset M_{k}^{\ast}%
\]
for $\{i,j,k\}=\{1,2,3\}$. \ Thus,
\[
e(M_{1}M_{2}M_{3},M_{1}^{\ast}M_{2}^{\ast}M_{3}^{\ast})\subset\sum_{i=1}%
^{3}e(M_{i},M_{i}^{\ast})
\]
by Lemma \ref{lem: e properties}(v). \ Since $\rho(e(M_{i},M_{i}^{\ast}))$
stabilizes $M_{1}M_{2}M_{3}$ and $\rho(e(M_{i},M_{i}^{\ast}))^{\ast}$
stabilizes $M_{1}^{\ast}M_{2}^{\ast}M_{3}^{\ast}$, we see $\mathcal{G}%
(M_{1},M_{2},M_{3},u)$ is a subalgebra.

Since $\mathcal{G}(M_{1},M_{2},h,u)$ is the subalgebra generated by
$M_{1}\Lambda_{2}(M_{2})$ and

\noindent$\mathcal{G}(M_{1},M_{2},M_{3},h,u)$ is the subalgebra generated by
$M_{1}M_{2}M_{3}$, we can use the isomorphism (\ref{eq: quad etale untwists})
to reduce cases (iii) and (iv) to cases (i) and (ii). \ In cases (i) or (ii),
there is a faithfully flat $F\in k$-alg with each $M_{iF}$ free of rank $3$ or
$6$. \ We can choose a basis $B=\{m_{1},\ldots,m_{9}\}$ for $M_{F}$ with
$1\otimes u=u_{B}$ and $1\otimes\mu=\mu_{B}$ which is compatible with the
direct sum decomposition; i.e., $M_{1F}=span_{F}(m_{1},m_{2},m_{3})$ and
$M_{2F}=span_{F}(m_{4},\ldots,m_{9}) $ or $M_{iF}=span_{F}(m_{3i-2}%
,m_{3i-1},m_{3i})$. \ The isomorphism $\mathcal{G}(M,u)_{F}\cong
\mathcal{G}(\mathbb{Z}^{9},u_{C})_{F}$ allows us to reduce to the cases
\begin{align*}
M  & =\mathbb{Z}^{9}=\mathbb{Z}^{(1,3)}\oplus\mathbb{Z}^{(4,9)},\\
M  & =\mathbb{Z}^{9}=\mathbb{Z}^{(1,3)}\oplus\mathbb{Z}^{(4,6)}\oplus
\mathbb{Z}^{(7,9)}%
\end{align*}
where $\mathbb{Z}^{(i,j)}=span_{\mathbb{Z}}(m_{i},\ldots,m_{j})$ for $1\leq
i\leq j\leq9$ and $C=\{m_{1},\ldots,m_{9}\}$ is the standard basis for
$\mathbb{Z}^{9}$.

Let $\mathcal{G}=\mathcal{G}(\mathbb{C}^{9},u_{C})$ as in Theorem
\ref{thm: type E8}. \ Let%
\begin{align*}
\beta_{i}  & =\alpha_{i}=\varepsilon_{i}-\varepsilon_{i-1}\text{ for
}i=2,3,5,6,7,\\
\beta_{1}  & =\alpha_{9}=\varepsilon_{9}-\varepsilon_{8},\\
\beta_{4}  & =\varepsilon_{2}+\varepsilon_{4}+\varepsilon_{8},\\
\beta_{8}  & =\varepsilon_{4}+\varepsilon_{5}+\varepsilon_{6}.\
\end{align*}
As before, by checking the $\beta_{j}$-string through $\beta_{i}$, we see that
$\tilde{\Pi}=\{\beta_{1},\ldots,\beta_{8}\}$ is a fundamental system of roots
with Dynkin diagram $E_{8}$ with $\beta_{2},\ldots,\beta_{8}$ forming a
diagram of type $A_{7}$ and $\beta_{1}$ connected to $\beta_{4}$. Moreover,
replacing $h_{i}$ in $\hat{C}$ by $\tilde{h}_{i}=h_{\beta_{i}}$, we get a
Chevalley basis $\tilde{C}$. \ Let
\begin{align*}
h^{\prime}  & =\rho(diag(-2,-2,-2,1,1,1,1,1,1)),\\
h^{\prime\prime}  & =\rho(diag(1,1,1,-1,-1,-1,0,0,0)).
\end{align*}
Since%
\begin{align*}
\beta_{i}(h^{\prime})  & =0\text{ for }1\leq i\leq7,\\
\beta_{8}(h^{\prime})  & =3,\\
\beta_{i}(h^{\prime\prime})  & =0\text{ for }1\leq i\leq6,\\
\beta_{7}(h^{\prime\prime})  & =1,
\end{align*}
we see that%
\[
\Sigma^{\prime}=\{\alpha\in\Sigma:\alpha(h^{\prime})=0\}
\]
is a root system of type $E_{7}$ and%
\[
\Sigma^{\prime\prime}=\{\alpha\in\Sigma:\alpha(h^{\prime})=\alpha
(h^{\prime\prime})=0\}
\]
is a root system of type $E_{6}$. \ Moreover, the subalgebra $\mathcal{G}%
^{\prime}$ generated by all $\mathcal{G}_{\alpha}$ with $\alpha\in
\Sigma^{\prime}$ is a complex simple Lie algebra of type $E_{7}$ with
Chevalley basis $\tilde{C}\cap\mathcal{G}^{\prime}$ and the subalgebra
$\mathcal{G}^{\prime\prime}$ generated by all $\mathcal{G}_{\alpha}$ with
$\alpha\in\Sigma^{\prime\prime}$ is a complex simple Lie algebra of type
$E_{6}$ with Chevalley basis $\tilde{C}\cap\mathcal{G}^{\prime\prime}$. \ We
see
\begin{align*}
\Sigma^{\prime}=\{\varepsilon_{i}-\varepsilon_{j}:1\leq i\neq j\leq3\text{ or
}4\leq i\neq j\leq9\}  & \\
\cup\ \ \{\pm(\varepsilon_{i}+\varepsilon_{j}+\varepsilon_{k}):1\leq
i\leq3\text{ and }4\leq j\neq k\leq9\},  & \\
\Sigma^{\prime\prime}=\{\varepsilon_{i}-\varepsilon_{j}:3l-2\leq i\neq
j\leq3l\text{ for }l=1,2,\text{ or }3\}  & \\
\cup\ \ \{\pm(\varepsilon_{i_{1}}+\varepsilon_{i_{2}}+\varepsilon_{i_{3}%
}):3l-2\leq i_{l}\leq3l\}.  &
\end{align*}
Since $[m_{i}m_{k}m_{l},\phi_{l}\phi_{k}\phi_{j}]=\rho(e_{ij})$ where
$C=\{m_{1},\ldots,m_{9}\}$, we see that the $\mathbb{Z}$-span of $\tilde
{C}\cap\mathcal{G}^{\prime}$ is generated as a $\mathbb{Z}$-algebra by
\[
\tilde{C}\cap(\mathbb{Z}^{(1,3)}\Lambda_{2}(\mathbb{Z}^{(4,9)})\cup
\mathbb{Z}^{(1,3)\ast}\Lambda_{2}(\mathbb{Z}^{(4,9)\ast}))
\]
while the $\mathbb{Z}$-span of $\tilde{C}\cap\mathcal{G}^{\prime\prime}$ is
generated as a $\mathbb{Z}$-algebra by
\[
\tilde{C}\cap(\mathbb{Z}^{(1,3)}\mathbb{Z}^{(4,6)}\mathbb{Z}^{(7,9)}%
\cup\mathbb{Z}^{(1,3)\ast}\mathbb{Z}^{(4,6)\ast}\mathbb{Z}^{(7,9)\ast}).
\]
In other words, $\mathcal{G}(\mathbb{Z}^{(1,3)},\mathbb{Z}^{(4,9)},u_{C})$ is
the $\mathbb{Z}$-span of $\tilde{C}\cap\mathcal{G}^{\prime}$ and%
\[
\mathcal{G}(\mathbb{Z}^{(1,3)},\mathbb{Z}^{(4,6)},\mathbb{Z}^{(7,9)},u_{C})
\]
is the $\mathbb{Z}$-span of $\tilde{C}\cap\mathcal{G}^{\prime\prime}$.
\end{proof}

\bigskip

\end{document}